\documentclass[a4paper,12pt]{article}
\usepackage{amsmath}
\usepackage{amsthm}
\usepackage{amssymb}
\usepackage{enumerate}
\usepackage{url}

\setlength{\textwidth}{38em}
\oddsidemargin=3pt 
\addtolength{\topmargin}{-10mm}
\setlength{\textheight}{230mm}

\title{A unification of the hypercontractivity and its exponential variant of the Ornstein-Uhlenbeck semigroup}

\author{Yuu Hariya\thanks{Mathematical Institute, 
Tohoku University, Aoba-ku, Sendai 980-8578, Japan. }}

\date{\empty}
\pagestyle{myheadings}

\numberwithin{equation}{section}

\theoremstyle{plain}

\newtheorem{thm}{Theorem}[section]

\newtheorem{prop}{Proposition}[section]

\newtheorem{cor}{Corollary}[section]

\newtheorem{lem}{Lemma}[section]

\theoremstyle{definition}

\theoremstyle{remark}
\newtheorem{rem}{Remark}[section]
\newtheorem{exm}{Example}[section]

\DeclareMathOperator*{\esssup}{ess\,sup}
\DeclareMathOperator*{\essinf}{ess\,inf}

%%%%%%    TEXT START    %%%%%%
\begin{document}

\def\N {\mathbb{N}}
\def\R {\mathbb{R}}
\def\Q {\mathbb{Q}}

\def\F {\mathcal{F}}

\def\kp {\kappa}

\def\ind {\boldsymbol{1}}

\def\al {\alpha }
\def\la {\lambda }
\def\ve {\varepsilon}
\def\Om {\Omega}

\def\ga {\gamma }

\newcommand\ND{\newcommand}
\newcommand\RD{\renewcommand}

\ND\lref[1]{Lemma~\ref{#1}}
\ND\tref[1]{Theorem~\ref{#1}}
\ND\pref[1]{Proposition~\ref{#1}}
\ND\sref[1]{Section~\ref{#1}}
\ND\ssref[1]{Subsection~\ref{#1}}
\ND\aref[1]{Appendix~\ref{#1}}
\ND\rref[1]{Remark~\ref{#1}} 
\ND\cref[1]{Corollary~\ref{#1}}
\ND\eref[1]{Example~\ref{#1}}
\ND\fref[1]{Fig.\ {#1} }
\ND\lsref[1]{Lemmas~\ref{#1}}
\ND\tsref[1]{Theorems~\ref{#1}}
\ND\dref[1]{Definition~\ref{#1}}
\ND\psref[1]{Propositions~\ref{#1}}
\ND\rsref[1]{Remarks~\ref{#1}}
\ND\sssref[1]{Subsections~\ref{#1}}

\ND\pr{\mathbb{P}}
\ND\ex{\mathbb{E}}
\ND\br{W}

\ND\gss[1]{\gamma_{#1}}
\ND\ou{Q}
\ND\norm[2]{\left\|{#1}\right\|_{#2}}
\ND\lp[2]{L^{#1}({#2})}
\ND\calB{\mathcal{B}}
\ND\vp{\varphi}
\ND\U{U}
\ND\V{V}
\ND\ro{\rho (1)}
\ND\dv{\theta}
\ND\cf{c}
\ND\uf{u}
\ND\vf{v}
\ND\su{\mathsf{u}}
\ND\sv{\mathsf{v}}
\ND\G{G}
\ND\h{H}
\ND\tu{\tilde{\su}}
\ND\tv{\tilde{\sv}}
\ND\C[1]{C^{1}_{b,0}(#1)}
\ND\D{d}

\def\thefootnote{{}}

\maketitle 
\begin{abstract}
Let $\gss{\D }$ be the $\D $-dimensional standard Gaussian 
measure and $\{ \ou _{t}\} _{t\ge 0}$ the Ornstein-Uhlenbeck 
semigroup acting on $\lp{1}{\gss{\D }}$. 
We show that the hypercontractivity of 
$\{ \ou _{t}\} _{t\ge 0}$ is equivalent to the property that  
\begin{align*}
 \left\{ 
 \int _{\R ^{\D }}\exp \left( e^{2t}\ou _{t}f\right) d\gss{\D }
 \right\} ^{1/e^{2t}}
 \le \int _{\R ^{\D }}e^{f}\,d\gss{\D }, 
\end{align*}
which holds for any $f\in \lp{1}{\gss{\D }}$ with 
$e^{f}\in \lp{1}{\gss{\D }}$ and for any $t\ge 0$. 
We then derive a family of inequalities that unifies this 
exponential variant and the original hypercontractivity; 
a generalization of the Gaussian logarithmic 
Sobolev inequality is obtained as a corollary. 
A unification of the reverse hypercontractivity and the exponential variant 
is also provided. 
\footnote{E-mail: hariya@m.tohoku.ac.jp}
%%\footnote{{\itshape Running head}:~}
\footnote{{\itshape Key Words and Phrases}:~Ornstein-Uhlenbeck semigroup; hypercontractivity; reverse hypercontractivity; logarithmic Sobolev inequality.}
\footnote{2010 {\itshape Mathematical Subject Classification}:~Primary {47D07}; Secondary {60H30}, {60J65}.}
\end{abstract}
%% 47D07 Markov semigroups and applications to diffusion processes 
%% 60H30 Applications of stochastic analysis 
%% 60J65 Brownian motion 

%%%%%% New section %%%%%%
\section{Introduction}\label{;intro}
For a given positive integer $\D $, we denote by $\gss{\D }$ the standard Gaussian 
measure on $(\R ^{\D },\calB (\R ^{\D }))$ with $\calB(\R ^{\D })$ the Borel 
$\sigma $-field on $\R ^{\D }$. For every $p>0$, define 
\begin{align*}
 \lp{p}{\gss{\D }}:=
 \left\{ 
 f:\R ^{\D }\to \R ;\,\text{$f$ is measurable and satisfies }
 \int _{\R ^{\D }}|f(x)|^{p}\,\gss{\D }(dx)<\infty 
 \right\} 
\end{align*} 
and set 
\begin{align*}
 \norm{f}{p}:=
 \left\{ 
 \int _{\R ^{\D }}|f(x)|^{p}\,\gss{\D }(dx)
 \right\} ^{1/p}, \quad f\in \lp{p}{\gss{\D }}. 
\end{align*}
(Although $\norm{\cdot }{p}$ is not a norm for $p<1$, we abuse the common 
notation in the sequel.) We denote by $\ou =\{ \ou _{t}\} _{t\ge 0}$ the 
Ornstein-Uhlenbeck semigroup acting on $\lp{1}{\gss{\D }}$: for 
$f\in \lp{1}{\gss{\D }}$ and $t\ge 0$, 
\begin{align*}
 \left( \ou _{t}f\right) \!(x)
 :=\int _{\R ^{\D }}
 f\left( e^{-t}x+\sqrt{1-e^{-2t}}y\right) \gss{\D }(dy), \quad x\in \R ^{\D };  
\end{align*}
note that $|\ou _{t}f|<\infty $ $\gss{\D }$-a.e.\ when 
$f\in \lp{1}{\gss{\D }}$, since there holds the identity 
\begin{align*}
 \int _{\R ^{\D }}\ou _{t}|f|\,d\gss{\D }
 =\int _{\R ^{\D }}|f|\,d\gss{\D }
\end{align*}
for any measurable function $f$ on $\R ^{\D }$. 
It is well known that $\ou $ enjoys the hypercontractivity: if 
$f\in \lp{p}{\gss{\D }}$ for some $p>1$, then 
\begin{align}\label{;hc}
 \norm{\ou _{t}f}{q(t)}\le \norm{f}{p} \quad \text{for all }t\ge 0, 
\end{align}
with $q(t)=e^{2t}(p-1)+1$. 

The hypercontractivity \eqref{;hc} was firstly 
observed by Nelson \cite{nel} and applied in quantum field theory; it was 
found later by Gross \cite{gro} to be equivalent to the (Gaussian) logarithmic 
Sobolev inequality: 
\begin{align}\label{;lsi}
 \int _{\R ^{\D }}|f|^{2}\log |f|\,d\gss{\D }
 \le \int _{\R ^{\D }}
 |\nabla f|^{2}\,d\gss{\D }
 +\norm{f}{2}^{2}\log \norm{f}{2}, 
\end{align} 
where $f$ is any weakly differentiable function 
in $\lp{2}{\gss{\D }}$ with $|\nabla f|\in \lp{2}{\gss{\D }}$. Because 
of their dimension-free formulation, the hypercontractivity \eqref{;hc}
as well as the logarithmic Sobolev inequality \eqref{;lsi} have importance in the 
Malliavin calculus; see, e.g., the monograph \cite{shi} by Shigekawa. We also 
remark that the Gaussian logarithmic Sobolev inequality \eqref{;lsi} goes back to 
Stam \cite{sta}, on which we refer the reader to \cite[Section~8.13]{ll}. 

In this paper, we show the equivalence between the hypercontractivity 
\eqref{;hc} and the following property of $\ou $: for any 
$f\in \lp{1}{\gss{\D }}$ with $e^{f}\in \lp{1}{\gss{\D }}$, it holds that 
\begin{align}\label{;ehc}
 \norm{\exp \left( \ou _{t}f\right) }{e^{2t}}
 \le \| e^{f}\| _{1} \quad \text{for all }t\ge 0. 
\end{align}

\begin{prop}\label{;pequiv}
 The hypercontractivity \eqref{;hc} and the property \eqref{;ehc} are 
 equivalent. 
\end{prop}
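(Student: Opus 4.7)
The plan is to prove the two implications separately. The direction \eqref{;hc}$\Rightarrow$\eqref{;ehc} admits a clean substitution-and-limit proof; the reverse \eqref{;ehc}$\Rightarrow$\eqref{;hc} I would route through the Gaussian logarithmic Sobolev inequality \eqref{;lsi}, which the excerpt already notes is equivalent to \eqref{;hc} by Gross's theorem.

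For \eqref{;hc}$\Rightarrow$\eqref{;ehc}, fix $t\ge 0$ and $f\in\lp{1}{\gss{\D }}$ with $e^{f}\in\lp{1}{\gss{\D }}$, and apply hypercontractivity to the auxiliary function $g:=e^{f/p}$ for $p>1$. A short computation yields $g\in\lp{p}{\gss{\D }}$ with $\norm{g}{p}=\norm{e^{f}}{1}^{1/p}$, while Jensen's inequality for the convex function $e^{x}$ applied inside the Gaussian integral defining $\ou _{t}$ gives $\ou _{t}g\ge\exp(\ou _{t}f/p)$ almost everywhere. Combined with \eqref{;hc} this yields
\begin{equation*}
  \norm{\exp(\ou _{t}f)}{q(t)/p}\le\norm{e^{f}}{1},\qquad q(t):=1+e^{2t}(p-1).
\end{equation*}
Since $q(t)/p=e^{2t}+(1-e^{2t})/p\nearrow e^{2t}$ as $p\to\infty$, Fatou's lemma applied to the non-negative integrands $\exp\{(q(t)/p)\ou _{t}f\}$, which converge pointwise a.e.\ to $\exp(e^{2t}\ou _{t}f)$, recovers \eqref{;ehc}.

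For the reverse \eqref{;ehc}$\Rightarrow$\eqref{;hc}, the analogous substitution $f=p\log g$ breaks down because Jensen's inequality for the concave function $\log$ points the wrong way. Instead I would differentiate \eqref{;ehc} at $t=0$ to extract \eqref{;lsi}: both sides of \eqref{;ehc} coincide at $t=0$, so $\Psi(t):=\log\norm{\exp(\ou _{t}f)}{e^{2t}}$ is dominated by $\Psi(0)$, forcing $\Psi'(0^{+})\le 0$. Writing out this derivative using $\partial _{t}\ou _{t}f|_{t=0}=Lf$ for the Ornstein-Uhlenbeck generator $L$ and the integration-by-parts identity $\int Lf\cdot e^{f}\,d\gss{\D }=-\int|\nabla f|^{2}e^{f}\,d\gss{\D }$, and finally substituting $u=e^{f/2}$, the inequality $\Psi'(0^{+})\le 0$ becomes precisely \eqref{;lsi}. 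Gross's classical theorem then deduces \eqref{;hc} from \eqref{;lsi}, closing the loop.

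The principal obstacle lies in this reverse direction: one must justify the differentiation of $\Psi$ at $t=0$ on a sufficiently rich test class of $f$ (smooth and bounded, or in a Hermite polynomial algebra), and then transfer \eqref{;lsi} from this class to general weakly differentiable $u$ by a density argument in $\lp{2}{\gss{\D }}$ exploiting the regularizing effect of $\ou _{t}$. Once \eqref{;lsi} is obtained, invoking Gross's equivalence is routine.
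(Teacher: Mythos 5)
Your proposal is correct and follows essentially the same route as the paper: $\eqref{;hc}\Rightarrow\eqref{;ehc}$ via the substitution $g=e^{f/p}$, Jensen's inequality, and letting $p\to\infty$ (the paper leaves the passage to the limit implicit where you invoke Fatou, a harmless refinement), and $\eqref{;ehc}\Rightarrow\eqref{;lsi}\Rightarrow\eqref{;hc}$ by differentiating the exponential inequality at the initial time and appealing to Gross's equivalence. The only difference is technical: you compute $\Psi'(0^{+})$ directly from the Ornstein--Uhlenbeck generator $L$ and the Dirichlet-form integration by parts, whereas the paper carries out the same differentiation at $\tau=1$ in the reparametrized time $\tau=e^{-2t}$ within its Brownian-martingale/Clark--Ocone framework (\lref{;lderisv}), which is the machinery it reuses throughout.
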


In fact, by using Jensen's inequality, it is easily seen that \eqref{;hc} 
implies \eqref{;ehc}; on the other hand, it can also be seen that 
\eqref{;ehc} implies the logarithmic Sobolev inequality \eqref{;lsi}, 
and hence implies \eqref{;hc} thanks to the above-mentioned 
equivalence between \eqref{;hc} and \eqref{;lsi}.  

We then show that the above two properties \eqref{;hc} and \eqref{;ehc} of 
$\ou $ are unified into 

\begin{thm}\label{;tunif}
 Let a positive function $\cf :(0,\infty )\to (0,\infty )$ be in 
 $C^{1}((0,\infty ))$ and satisfy 
 \begin{align}\label{;cond1}\tag{C}
  \text{$\cf '>0$ and 
  $\dfrac{\cf}{\cf '}$ is concave on $(0,\infty )$.}
 \end{align}
 We set 
 \begin{align}
  \uf (t,x)=\int _{0}^{x}\cf (y)^{e^{2t}}\,dy, 
  \quad t\ge 0,\ x>0. \label{;defuf}
 \end{align}
 Then for any nonnegative, measurable function $f$ on $\R ^{\D }$ such that 
 \begin{align}\label{;intf}
  \uf (0,f)\in \lp{1}{\gss{\D }}, 
 \end{align}
 we have 
 \begin{align}\label{;genhc}
  \vf \left( t,\norm{\uf (t,\ou _{t}f)}{1}\right) 
  \le \vf \left( 0,\norm{\uf (0,f)}{1}\right) \quad \text{for all }t\ge 0. 
 \end{align}
 Here for every $t\ge 0$, the function 
 $\vf (t,\cdot )$ is the inverse function of 
 $\uf (t,x),\,x>0$. 
\end{thm}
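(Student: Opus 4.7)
The plan is to follow Gross's strategy: introduce the functional
\begin{align*}
 H(t) := \vp \bigl( t, F(t)\bigr) ,
 \qquad
 F(t) := \int _{\R ^{\D }}\uf (t, \ou _{t}f)\, d\gss{\D },
\end{align*}
and prove \eqref{;genhc} by establishing the monotonicity $H'(t)\le 0$ for $t\ge 0$. I would first reduce, by a routine approximation (truncating $f$ from both sides and passing to the limit via dominated/monotone convergence together with \eqref{;intf}), to the case where $f$ is smooth and takes values in a compact subset of $(0,\infty )$; then $\ou _{t}f$ inherits these properties and all manipulations below are rigorous.

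Implicit differentiation of $\uf (t, \vp (t, y))=y$ gives
\begin{align*}
 \partial _{t}\vp (t, y) = -\frac{\partial _{t}\uf (t, \vp (t, y))}{\cf (\vp (t, y))^{e^{2t}}},
 \qquad
 \partial _{y}\vp (t, y) = \frac{1}{\cf (\vp (t, y))^{e^{2t}}}.
\end{align*}
Combining with $\partial _{t}\ou _{t}f = L\ou _{t}f$, where $L:=\Delta -x\cdot \nabla $ is the Ornstein--Uhlenbeck generator, and the integration-by-parts identity $\int gLh\, d\gss{\D }=-\int \nabla g\cdot \nabla h\, d\gss{\D }$, I obtain, writing $\bar a(t):=\vp (t, F(t))$,
\begin{align*}
 \cf (\bar a(t))^{e^{2t}}\, H'(t)
 &= \int _{\R ^{\D }}\partial _{t}\uf (t, \ou _{t}f)\, d\gss{\D } - \partial _{t}\uf (t, \bar a(t)) \\
 &\quad - \int _{\R ^{\D }}\partial _{x}^{2}\uf (t, \ou _{t}f)\, |\nabla \ou _{t}f|^{2}\, d\gss{\D }.
\end{align*}
Since the prefactor is positive, the task reduces to verifying the ``generalized log-Sobolev'' inequality
\begin{align*}
 \int _{\R ^{\D }}\partial _{t}\uf (t, \ou _{t}f)\, d\gss{\D } - \partial _{t}\uf (t, \bar a(t)) \le \int _{\R ^{\D }}\partial _{x}^{2}\uf (t, \ou _{t}f)\, |\nabla \ou _{t}f|^{2}\, d\gss{\D } \quad (\ast )
\end{align*}
for each $t\ge 0$.

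Next, hypothesis \eqref{;cond1} is preserved under $\cf \mapsto \cf ^{e^{2s}}$ (since $\cf ^{e^{2s}}/(\cf ^{e^{2s}})'=e^{-2s}\cf /\cf '$), and the corresponding $\uf $ at parameter $r$ agrees with the original $\uf $ at parameter $s+r$. Together with the semigroup property $\ou _{s+r}=\ou _{r}\ou _{s}$, this reduces $(\ast )$ at general $t\ge 0$ to the case $t=0$, which reads
\begin{align*}
 2\int _{\R ^{\D }}\Psi (f)\, d\gss{\D } - 2\Psi (\bar f) \le \int _{\R ^{\D }}\cf '(f)\, |\nabla f|^{2}\, d\gss{\D },
\end{align*}
where $\Psi (x):=\int _{0}^{x}\cf (y)\log \cf (y)\, dy$, $\U (x):=\int _{0}^{x}\cf (y)\, dy$ and $\bar f := \U ^{-1}\bigl( \int \U (f)\, d\gss{\D }\bigr) $. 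I would deduce this by applying the Gaussian logarithmic Sobolev inequality \eqref{;lsi} to $\G (f)$ with $\G $ chosen so that $\G '(x)=\sqrt{\cf '(x)/2}$; then the right-hand side of LSI already equals $\int \cf '(f)\, |\nabla f|^{2}\, d\gss{\D }$, and the concavity hypothesis on $\cf /\cf '$ enters through a Jensen-type comparison bounding $2\int \Psi (f)\, d\gss{\D } - 2\Psi (\bar f)$ by the LSI-entropy of $\G (f)^{2}$.

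The principal obstacle lies in this last comparison: condition \eqref{;cond1} must be precisely leveraged to reconcile the ``$\U $-mean'' $\bar f$ appearing on the left with the $L^{2}$-mean of $\G (f)$ emerging naturally from LSI. The special cases $\cf (y)=y$ (recovering the classical hypercontractivity \eqref{;hc}) and $\cf (y)=e^{y}$ (recovering the exponential variant \eqref{;ehc}) both have $\cf /\cf '$ affine, and should saturate the comparison.
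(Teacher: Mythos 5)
Your overall framework — differentiating $H(t)=\vp(t,\int\uf(t,\ou_t f)\,d\gss{\D})$ along the Ornstein--Uhlenbeck flow, integrating by parts with the generator $L$, and using the scaling $\cf\mapsto\cf^{e^{2s}}$ together with the semigroup property to reduce the monotonicity of $H$ to the single derivative inequality at $t=0$ — is sound and is the direct analytic counterpart of what the paper does probabilistically via the martingale $M_t(f)=\ex[f(\br_1)\mid\F_t]$, It\^o's formula and the Clark--Ocone representation. Your inequality $(\ast)$ at $t=0$, namely
\begin{align*}
 2\int_{\R^{\D}}\Psi(f)\,d\gss{\D}-2\Psi(\bar f)\le\int_{\R^{\D}}\cf'(f)\,|\nabla f|^2\,d\gss{\D},
\end{align*}
is precisely the paper's Corollary~\ref{;cglsi}, so you have correctly identified the key estimate that has to be established.

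The gap is in the final step: the claimed ``Jensen-type comparison'' bounding $2\int\Psi(f)-2\Psi(\bar f)$ by $\mathrm{Ent}\bigl(\G(f)^2\bigr)$ with $\G'=\sqrt{\cf'/2}$, which would reduce everything to the classical LSI. First, the prescription $\G'=\sqrt{\cf'/2}$ leaves an undetermined additive constant, and the quantity $\mathrm{Ent}(\G(f)^2)$ genuinely depends on that constant while the left side does not. Second, the comparison is false for natural choices. Take $\cf(x)=x+1$ (which satisfies~\eqref{;cond1}: $\cf/\cf'=x+1$ is affine), $\G(x)=x/\sqrt 2$, and a two-point law with $f\in\{1,2\}$ each with weight $1/2$: one computes $2\int\Psi(f)-2\Psi(\bar f)=\tfrac92\log 3-\tfrac{13}{4}\log\tfrac{13}{2}\approx 0.2467$, whereas $\mathrm{Ent}(f^2/2)=\tfrac32\log 2-\tfrac54\log\tfrac54\approx 0.2409$, so the proposed bound fails. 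One can recover equality here only by shifting to $\G(x)=(x+1)/\sqrt 2$, and in the exponential case $\cf=e^x$ only by taking $\G(x)=\sqrt2\,e^{x/2}$ rather than $\G(0)=0$; the ``right'' constant is thus case-dependent, and for genuinely (non-affine) concave $\cf/\cf'$ there is no single choice that makes the comparison an equality in the extremal cases and an inequality otherwise. So the reduction to LSI, as sketched, does not go through; the concavity of $\cf/\cf'$ has to be used in a structurally different way.

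The paper avoids this entirely: it shows (Lemma~\ref{;lconcave}) that the \emph{nonlinear} function
\begin{align*}
 \vp(t,x)=t^2\,\frac{\cf}{\cf'}\bigl(\sv(t,x)\bigr)\,\cf\bigl(\sv(t,x)\bigr)^{1/t}
\end{align*}
is concave in $x$ as soon as $\cf/\cf'$ is concave, and then applies the conditional Jensen inequality to $\vp(t,\cdot)$ along the Brownian filtration (Lemma~\ref{;llbd}). The resulting bound combines with an exact algebraic cancellation between $-t\,\su_x^2/\vp(t,\su)$ and $\su_{xx}$ to give the sign of $\tfrac{d}{dt}\sv(t,\ex[N_t])$ directly, with no appeal to LSI. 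That cancellation, and the concavity of $\vp$, together play the role you are asking your ``Jensen-type comparison'' to play, but they do not reduce to a change of variables into the classical log-Sobolev inequality. To repair your argument you would need to reproduce an estimate of that kind, rather than compare two entropy functionals built from different transforms of $f$.
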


It is easily checked that two functions $x^{p-1}$ with $p>1$ and 
$e^{x}$ fulfill the condition \eqref{;cond1}; in fact, they both 
satisfy $(\cf /\cf ')''=0$. 
These two choices of 
$\cf $ in \tref{;tunif} lead to \eqref{;hc} and \eqref{;ehc}, 
respectively; see \rref{;runif} for details. For other examples of $\cf $ 
satisfying \eqref{;cond1}, see \eref{;cexample}. We also show that 
differentiating the left-hand side of \eqref{;genhc} at $t=0$ gives us 
a generalization of the logarithmic Sobolev inequality \eqref{;lsi}; 
see \cref{;cglsi}. 

\begin{rem}\label{;rtunif}
\thetag{1} The condition \eqref{;intf} imposed on a nonnegative $f$ 
implies $f\in \lp{1}{\gss{\D }}$, hence $\ou _{t}f$ in 
\eqref{;genhc} is well-defined. To see this, note that 
the positivity and concavity of $\cf /\cf '$ entail that 
there exist positive constants $\kp _{1},\kp _{2}$ such that 
\begin{align*}
 \frac{\cf }{\cf '}(x)\le \kp _{1}x+\kp _{2} \quad \text{for all }x>0, 
\end{align*}
from which it follows that for all 
$x,y>0$ with $x>y$, 
\begin{align*}
 \frac{\cf (x)}{\cf (y)}
 \ge 
 \left( 
 \frac{\kp _{1}x+\kp _{2}}{\kp _{1}y+\kp _{2}}
 \right) ^{1/\kp _{1}}. 
\end{align*}
Therefore if \eqref{;intf} is fulfilled, then we have 
$f\in \lp{1+1/\kp _{1}}{\gss{\D }}$ for some $\kp _{1}>0$. 

\noindent 
\thetag{2} The left-hand side of \eqref{;genhc} is nonincreasing 
in $t$: 
\begin{align}\label{;incr}
 \vf \left( t+s,\norm{\uf (t+s,\ou _{t+s}f)}{1}\right) 
  \le \vf \left( s,\norm{\uf (s,\ou _{s}f)}{1}\right) 
\end{align}
for any $s,t\ge 0$. To see \eqref{;incr}, fix $s\ge 0$ and set for 
$t\ge 0$ and $x>0$, 
\begin{align*}
 \cf ^{s}(x)=\left\{ \cf (x)\right\} ^{e^{2s}}
 && \text{and} && 
 \uf ^{s}(t,x)=\int _{0}^{x}\left\{ \cf ^{s}(y)\right\} ^{e^{2t}}dy
 \equiv \uf (t+s,x). 
\end{align*}
We also write $\vf ^{s}(t,\cdot )$ for the inverse function of 
$\uf ^{s}(t,\cdot )$. Since $\cf ^{s}/(\cf ^{s})'=e^{-2s}\cf /\cf '$, 
the function $\cf ^{s}/(\cf ^{s})'$ is also concave on 
$(0,\infty )$. Therefore we may apply \tref{;tunif} to 
$\ou _{s}f$ with replacing $\cf $, $\uf $ and $\vf $ therein by 
$\cf ^{s}$, $\uf ^{s}$ and $\vf ^{s}$, respectively, to get 
\begin{align*}
 \vf ^{s}\left( t,\norm{\uf ^{s}(t,\ou _{t}(\ou _{s}f))}{1}\right) 
  \le \vf ^{s}\left( 0,\norm{\uf ^{s}(0,\ou _{s}f)}{1}\right) , 
\end{align*}
which is \eqref{;incr} thanks to the identities 
$\uf ^{s}(t,\cdot )=\uf (t+s,\cdot )$, 
$\vf ^{s}(t,\cdot )=\vf (t+s,\cdot )$ and the semigroup property 
$\ou _{t}(\ou _{s}f)=\ou _{t+s}f$. The monotonicity 
\eqref{;incr} may also be seen directly from the proof of the theorem 
given in \sref{;sprft}. 
\end{rem}

It will also be shown that if we replace \eqref{;cond1} by 
the condition that $\cf '<0$ and $\cf /\cf '$ is convex 
on $(0,\infty )$, then the concluding inequality 
\eqref{;genhc} is reversed, yielding in particular the 
reverse hypercontractivity of $\ou $: 
if a $\gss{\D }$-a.e.\ positive $f\in \lp{1}{\gss{\D }}$ satisfies 
$1/f\in \lp{\al }{\gss{\D }}$ for some $\al >0$, then it holds that 
\begin{align}\label{;rhc}
 \norm{1/\ou _{t}f}{e^{2t}(\al +1)-1}\le \norm{1/f}{\al } 
 \quad \text{for all }t\ge 0. 
\end{align}
See \sref{;srhc}. 

We remark that since there 
are not involved any constants dependent on the dimension $\D $, every 
result mentioned above can be extended to the framework of abstract 
Wiener space through finite-dimensionalization. 
\smallskip 

We give an outline of the paper. In \sref{;spre} we provide preliminary 
lemmas. In \sref{;sprfp} we prove \pref{;pequiv}. 
In \sref{;sprft}, we give a proof of \tref{;tunif} as well as examples 
of functions $\cf $ satisfying the assumption of the theorem. 
As a corollary to \tref{;tunif}, we also derive a family of 
inequalities that includes the logarithmic Sobolev inequality \eqref{;lsi} 
as a particular case. In the final section, we show a unification 
of the reverse hypercontractivity \eqref{;rhc} and the 
exponential variant \eqref{;ehc} of the hypercontractivity; 
some related inequalities are also presented. 
\medskip 

In the sequel, we denote by $x\cdot y$ the inner product 
of $x$ and $y$ in $\R ^{\D }$ 
and by $|x|$ the Euclidean norm of $x$: $|x|=\sqrt{x\cdot x}$. 
Given a positive integer $m$, the symbol 
$C^{1}_{b}(\R ^{m})$ stands for the set of bounded 
$C^{1}$-functions on $\R ^{m}$ with bounded derivatives.
 We denote by $\C{\R ^{m}}$ 
the set of functions $f$ in $C^{1}_{b}(\R ^{m})$ bounded away 
from $0$: $\inf \limits_{x\in \R ^{m}}f(x)>0$. For a given multivariate 
function $g(t,x)$, its subscripts denote partial differentiations: 
$g_{x}(t,x)=(\partial g/\partial x)(t,x)$, 
$g_{tx}(t,x)=(\partial ^{2}g/\partial x\partial t)(t,x)$, and so on. 
For two functions $h_{1}(z),h_{2}(z)$ in a variable $z$, we often write 
$(h_{1}/h_{2})(z)$ to denote $h_{1}(z)/h_{2}(z)$. Other notation will be 
introduced as needed. 

%%%%%% New Section %%%%%%
\section{Preliminaries}\label{;spre}
In this section, we state and prove preliminary lemmas. 
For this purpose, we prepare a probability space 
$(\Om, \F ,\pr )$ on which a $\D $-dimensional standard 
Brownian motion $\br =\{ \br _{t}\} _{0\le t\le 1}$ is defined. 
We denote by $\{ \F _{t}\} _{0\le t\le 1}$ the (augmentation of) 
the natural filtration of $\br $. Pick $f\in \lp{1}{\gss{\D }}$ 
and set 
\begin{align*}
 M_{t}\equiv M_{t}(f):=\ex \left[ 
 f(W_{1})|\F _{t}
 \right] ,\quad 0\le t\le 1. 
\end{align*}
Then by the Markov property of $\br $, 
\begin{align*}
 M_{t}=\ex \left[ 
 f(W_{1-t}+x)
 \right] \!\big| _{x=\br _{t}} \quad \text{a.s.}
\end{align*}
for any $0\le t\le 1$, which leads to the identity in law: 
\begin{align}\label{;idenlaw}
 \left( \ou _{t}f,\gss{\D }\right) \stackrel{(d)}{=}
 \left( M_{e^{-2t}},\pr \right) \quad \text{for every $t\ge 0$}. 
\end{align}
Our proof of \pref{;pequiv} and \tref{;tunif} utilizes this identity. 

For given $-\infty \le l<r\le \infty $, 
let $\su (t,x),\,(t,x)\in (0,1]\times (l,r)$, be a nonnegative 
$C^{1,2}$-function such that $\su _{x}$ does not vanish. In the remainder 
of this section, we let $f$ be in $C_{b}^{1}(\R ^{\D })$ and suppose 
that $f$ fulfills 
\begin{align}\label{;bounds}
 l<\inf _{x\in \R ^{\D }}f(x)\le \sup _{x\in \R ^{\D }}f(x)<r. 
\end{align}
In the subsequent sections, we take either $-\infty $ or $0$ for $l$ 
and $\infty $ for $r$. In order to develop the process 
$\{ \su (t,M_{t})\} _{0<t\le 1}$ 
by applying It\^o's formula, we use the martingale representation 
for $\{ M_{t}\} _{0\le t\le 1}$. Set a $\D $-dimensional process 
$\dv =\{ \dv _{t}\} _{0\le t\le 1}$ by 
\begin{align}\label{;defv}
 \dv _{t}=\ex \left[ \nabla f(\br _{1-t}+x)\right] \!\big| _{x=\br _{t}}. 
\end{align}

\begin{lem}\label{;lco}
 We have $\pr $-a.s., 
 \begin{align}\label{;msde}
  M_{t}=\ex \left[ f(\br _{1})\right] 
  +\int _{0}^{t}\dv _{s}\cdot d\br _{s}
  \quad \text{for all }0\le t\le 1.  
 \end{align}
\end{lem}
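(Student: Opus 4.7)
The plan is to realize $M_{t}$ as $g(t,\br _{t})$ for a smooth deterministic function $g$ and then apply It\^o's formula. By the Markov property of $\br $ (already invoked to obtain \eqref{;idenlaw}), we have $\pr $-a.s.\ that $M_{t}=g(t,\br _{t})$ for every $0\le t<1$, where
\begin{align*}
 g(t,x):=\ex \!\left[ f(\br _{1-t}+x)\right]
 =\int _{\R ^{\D }}f\!\left( x+\sqrt{1-t}\,y\right) \gss{\D }(dy).
\end{align*}

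Since $f\in C_{b}^{1}(\R ^{\D })$, differentiation under the integral sign is justified, and repeated differentiation in $x$ against the Gaussian density shows $g\in C^{1,\infty }([0,1)\times \R ^{\D })$, with in particular
\begin{align*}
 \nabla _{x}g(t,x)=\ex \!\left[ \nabla f(\br _{1-t}+x)\right] ,
\end{align*}
so that $\nabla _{x}g(t,\br _{t})=\dv _{t}$ by the definition \eqref{;defv}. A direct computation with the Gaussian kernel (or the fact that $g$ is a Kolmogorov backward expectation of $f(\br _{1})$) shows moreover that $g$ satisfies the backward heat equation
\begin{align*}
 g_{t}+\tfrac{1}{2}\Delta _{x}g=0 \quad \text{on $[0,1)\times \R ^{\D }$}.
\end{align*}

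Applying It\^o's formula to $g(s,\br _{s})$ on $[0,t]$ for any $0\le t<1$, the $g_{t}\,ds$ and $\tfrac{1}{2}\Delta _{x}g\,ds$ contributions cancel by virtue of the PDE, leaving
\begin{align*}
 M_{t}-\ex \!\left[ f(\br _{1})\right]
 =\int _{0}^{t}\nabla _{x}g(s,\br _{s})\cdot d\br _{s}
 =\int _{0}^{t}\dv _{s}\cdot d\br _{s},
\end{align*}
which is \eqref{;msde} for $0\le t<1$. To pass to $t=1$: since $f$ is bounded, $M$ is a bounded continuous martingale closed by $f(\br _{1})$, so $M_{t}\to M_{1}=f(\br _{1})$ as $t\uparrow 1$, $\pr $-a.s.\ and in $L^{2}$; since $\sup _{s,\omega }|\dv _{s}(\omega )|\le \sup _{x\in \R ^{\D }}|\nabla f(x)|<\infty $, the stochastic integral converges in $L^{2}$ to $\int _{0}^{1}\dv _{s}\cdot d\br _{s}$. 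Thus \eqref{;msde} extends to $t=1$. The main technical point is the boundary behaviour at $t=1$, where $g$ fails to be smooth; but the boundedness hypotheses on $f$ and $\nabla f$ make the passage to the limit entirely routine.
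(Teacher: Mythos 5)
Your proof is correct, but it takes a genuinely different route from the paper's. The paper derives \eqref{;msde} directly from the Clark--Ocone formula \eqref{;co} applied to $F(\br )=f(\br _{1})$: the integrand is $\ex [D_{s}f(\br _{1})|\F _{s}]=\ex [\nabla f(\br _{1})|\F _{s}]$, which the Markov property identifies with $\dv _{s}$, and the formula already holds on all of $[0,1]$, so there is no boundary issue to address. Your argument instead realizes $M_{t}$ as $g(t,\br _{t})$, verifies that $g$ solves the backward heat equation, and invokes It\^o's formula so that the drift terms cancel. This is more elementary in that it avoids Malliavin calculus and the Clark--Ocone theorem entirely, relying only on It\^o's formula and a PDE computation. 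The price is the extra work at $t=1$: since $g$ is only $C^{1,2}$ on $[0,1)\times \R ^{\D }$, the It\^o argument gives \eqref{;msde} for $t<1$ only, and you correctly close the gap by an $L^{2}$-convergence argument using the boundedness of $f$ and $\nabla f$ (which gives $\sup _{s}|\dv _{s}|\le \sup |\nabla f|<\infty $). Both proofs are valid; the paper's is shorter because it delegates the technical content to the Clark--Ocone formula, which the paper needs again later anyway (e.g.\ in the proof of \lref{;ldiff}), whereas yours is self-contained modulo It\^o's formula.
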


The above lemma is an immediate consequence of the Clark-Ocone 
formula. Because that formula will 
be used again, we provide its rough formulation as introducing 
the necessary notation; we do this in a slightly general situation although 
what we use repeatedly is the simplest case with $m=1$: 
let $F(\br )$ be a functional of $\br $ 
of the form 
\begin{align*}
 F(\br )=\phi \left( \br _{t_{1}},\ldots ,\br _{t_{m}}\right) 
\end{align*}
for some positive integer $m$ and $0\le t_{1},\ldots ,t_{m}\le 1$, 
and for some $\phi \in C^{1}_{b}(\R ^{\D \times m})$. 
We denote by $DF(\br )$ the Malliavin derivative of $F(\br )$, 
which is expressed as 
\begin{align*}
 D_{s}F(\br )=
 \sum _{i=1}^{m}\ind _{[0,t_{i}]}(s)
 \nabla _{x_{i}}\phi \left( \br _{t_{1}},\ldots ,\br _{t_{m}}\right) , 
 \quad 0\le s\le 1. 
\end{align*}
Then the Clark-Ocone formula states that $\pr $-a.s., 
\begin{align}\label{;co}
 \ex \left[ 
 F(\br )|\F _{t}
 \right] 
 =\ex [F(\br )]+\int _{0}^{t}
 \ex \left[ 
 D_{s}F(\br )|\F _{s}
 \right] \cdot d\br _{s}
\end{align}
for all $0\le t\le 1$. For more detailed accounts of the formula, 
see, e.g., \cite[Appendix~E]{ks}, \cite[Section~1.3]{nua}. 

\begin{proof}[Proof of \lref{;lco}]
 Applying \eqref{;co} to $f(\br _{1})$, we have 
 \begin{align*}
 M_{t}=\ex \left[ f(\br _{1})\right] +\int _{0}^{t}
 \ex \left[ 
 \nabla f(\br _{1})|\F_{s}
 \right] \cdot d\br _{s}. 
 \end{align*}
 By the Markov property of $\br $, 
 \begin{align*}
  \ex \left[ 
 \nabla f(\br _{1})|\F_{s}
 \right] 
 =\ex \left[ 
 \nabla f(\br _{1-s}+x)
 \right] \!\big| _{x=\br _{s}} \quad \text{a.s.,}
 \end{align*}
 which ends the proof due to the definition \eqref{;defv} of $\dv $. 
\end{proof}

By \eqref{;msde} and by It\^o's formula, we have 
$\pr $-a.s., 
\begin{align}
 &\su (t,M_{t})-\su (s,M_{s}) \notag \\
 &=\int _{s}^{t}
 \su _{t}(\tau ,M_{\tau })\,d\tau 
 +\int _{s}^{t}\su _{x}(\tau ,M_{\tau })\dv _{\tau }\cdot d\br _{\tau } 
 +\frac{1}{2}\int _{s}^{t}\su _{xx}(\tau ,M_{\tau })
 |\dv _{\tau }|^{2}\,d\tau  \label{;bif}
\end{align}
for all $0<s\le t\le 1$. As $f$ is assumed to satisfy \eqref{;bounds}, 
the stochastic integral above gives rise to a 
true martingale. Therefore, taking the expectation on both sides of 
\eqref{;bif} and differentiating both sides with respect to 
$t$, we obtain the relation 
\begin{align}\label{;diffn}
 \frac{d}{dt}\ex [N_{t}]
 =\ex \left[ 
 \su _{t}(t,M_{t})
 \right] +\frac{1}{2}
 \ex \left[ 
 \su _{xx}(t,M_{t})|\dv _{t}|^{2}
 \right] . 
\end{align}
Here and in what follows, we write 
\begin{align}\label{;defN}
 N_{t}=\su (t,M_{t}), \quad 0<t\le 1. 
\end{align}
We also denote by $\sv (t,\cdot )$ the inverse function 
of $\su (t,x),\,l<x<r$, for each fixed $0<t\le 1$. 

\begin{lem}\label{;lderisv}
 It holds that for any $0<t\le 1$, 
 \begin{align}
  &\su _{x}\!\left( t,\sv (t,\ex [N_{t}])\right) \frac{d}{dt}
  \sv (t,\ex [N_{t}]) \notag \\
  &=-\su _{t}(t,\sv (t,\ex [N_{t}]))
  +\ex \left[ 
  \su _{t}(t,M_{t}) 
  \right] +\frac{1}{2}
  \ex \left[ 
  \su _{xx}(t,M_{t})|\dv _{t}|^{2}
  \right] . \label{;derisv}
 \end{align}
\end{lem}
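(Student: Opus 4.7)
The plan is to deduce \eqref{;derisv} by differentiating in $t$ the defining identity of the inverse function, namely
\begin{align*}
 \su \bigl( t,\sv (t,\ex [N_{t}])\bigr) =\ex [N_{t}],
\end{align*}
and then substituting the formula \eqref{;diffn} for $(d/dt)\ex [N_{t}]$ on the right-hand side. All the ingredients are already in place: the process $M_{t}$ takes values in $[\inf f,\sup f]\subset (l,r)$ (conditional expectation preserves bounds), so $N_{t}=\su (t,M_{t})$ is a bounded random variable and $\ex [N_{t}]$ lies in the range of $\su (t,\cdot )$; moreover, since $\su _{x}$ does not vanish and $\su $ is $C^{1,2}$, the monotone function $\su (t,\cdot )$ has a $C^{1}$ inverse $\sv (t,\cdot )$ by the implicit function theorem, jointly differentiable in $(t,y)$.

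To carry out the computation, I would first record the two partial derivatives of $\sv $ obtained by differentiating $\su (t,\sv (t,y))=y$ separately in $t$ and in $y$, namely
\begin{align*}
 \sv _{t}(t,y)=-\frac{\su _{t}(t,\sv (t,y))}{\su _{x}(t,\sv (t,y))},
 \qquad
 \sv _{y}(t,y)=\frac{1}{\su _{x}(t,\sv (t,y))}.
\end{align*}
Next, using that $t\mapsto \ex [N_{t}]$ is differentiable on $(0,1]$ by \eqref{;diffn}, the chain rule gives
\begin{align*}
 \frac{d}{dt}\sv (t,\ex [N_{t}])
 =\sv _{t}(t,\ex [N_{t}])+\sv _{y}(t,\ex [N_{t}])\,\frac{d}{dt}\ex [N_{t}].
\end{align*}
Multiplying by $\su _{x}(t,\sv (t,\ex [N_{t}]))$ and invoking the two formulae just recorded collapses the right-hand side to $-\su _{t}(t,\sv (t,\ex [N_{t}]))+(d/dt)\ex [N_{t}]$, and substituting \eqref{;diffn} then yields \eqref{;derisv}.

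The argument is essentially a direct application of the chain rule, so the only point requiring care is the smoothness needed to justify it: one must verify that $\sv $ is jointly $C^{1}$ near the curve $t\mapsto (t,\ex [N_{t}])$ and that $t\mapsto \ex [N_{t}]$ is differentiable there. Both of these follow from the standing assumptions on $\su $ together with \eqref{;diffn}; the boundedness of $f$ via \eqref{;bounds} is used to guarantee that the stochastic integral in \eqref{;bif} is a true martingale, so that \eqref{;diffn} is available without further localization.
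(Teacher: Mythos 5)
Your proof is correct and follows essentially the same route as the paper: differentiate the inverse-function identity to obtain $\sv_t$ and $\sv_x$, apply the chain rule to $\sv(t,\ex[N_t])$, multiply by $\su_x$, and substitute \eqref{;diffn}. The extra remarks on why $\ex[N_t]$ lies in the range of $\su(t,\cdot)$ and on the $C^1$-smoothness of $\sv$ are sound and simply make explicit what the paper leaves implicit.
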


\begin{proof}
 Observe that due to the relation $x=\su (t,\sv (t,x))$, 
 \begin{align*}
  \sv _{t}(t,x)=-\frac{\su _{t}}{\su _{x}}(t,\sv (t,x)), 
 \end{align*}
 from which we see that 
 \begin{align*}
  \frac{d}{dt}\sv (t,\ex [N_{t}])
  &=\sv _{t}(t,\ex [N_{t}])
  +\sv _{x}(t,\ex [N_{t}])\frac{d}{dt}\ex [N_{t}] \\
  &=-\frac{\su _{t}}{\su _{x}}(t,\sv (t,\ex [N_{t}]))
  +\frac{1}{\su _{x}(t,\sv (t,\ex [N_{t}]))}
  \frac{d}{dt}\ex [N_{t}]. 
 \end{align*}
 Combining this expression with \eqref{;diffn}, 
 we obtain the lemma. 
\end{proof}

In the next lemma, we assume further that for every $0<t\le 1$, 
$\su _{t}$ is twice continuously differentiable with respect to 
the spatial variable $x$. Set 
\begin{align}\label{;defU}
 \U (t,x):=\left\{ 
 \left( 
 \frac{\su _{tx}}{\su _{x}}
 \right) _{x}\frac{1}{\su _{x}}
 \right\} (t,x), \quad (t,x)\in (0,1]\times (l,r). 
\end{align}

\begin{lem}\label{;ldiff}
 It holds that for any $0<t\le 1$, 
 \begin{align}
  &2\su _{x}\!\left( t,\sv (t,\ex [N_{t}])\right) \frac{d}{dt}
  \sv (t,\ex [N_{t}]) \notag \\
  &=\int _{0}^{1}
  \ex \left[ 
  \U \!\left( t,\sv (t,\ex [N_{t}|\F _{s}])\right) 
  \left| \ex [D_{s}N_{t}|\F _{s}]\right| ^{2}
  \right] ds
  +\ex \left[ \su _{xx}(t,M_{t})|\dv _{t}|^{2}\right] . \label{;eqdiff}
 \end{align}
\end{lem}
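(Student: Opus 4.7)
The plan is to combine \lref{;lderisv} with a stochastic-calculus identity obtained by applying It\^o's formula to an auxiliary function of the closed martingale associated with $N_{t}$. Doubling \eqref{;derisv}, one sees that \eqref{;eqdiff} is equivalent to
\begin{align*}
 2\ex\!\left[\su_{t}(t,M_{t})\right] - 2\su_{t}(t,\sv(t,\ex[N_{t}])) = \int_{0}^{1}\ex\!\left[\U(t,\sv(t,\ex[N_{t}|\F_{s}]))|\ex[D_{s}N_{t}|\F_{s}]|^{2}\right]ds,
\end{align*}
so it suffices to establish this identity.

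To this end, I introduce the auxiliary function $g(t,y):=\su_{t}(t,\sv(t,y))$, which, under the added regularity on $\su_{t}$, is of class $C^{2}$ in $y$ on the image of $\su(t,\cdot)$. Since $M_{t}=\sv(t,N_{t})$, the left-hand side above is $2\ex[g(t,N_{t})]-2g(t,\ex[N_{t}])$. Next, applying the Clark-Ocone formula \eqref{;co} to $N_{t}$ for fixed $t$, and noting that $N_{t}=\su(t,\phi(\br_{t}))$ with $\phi(x):=\ex[f(\br_{1-t}+x)]$ is a smooth functional of $\br_{t}$, one obtains the representation
\begin{align*}
 \ex[N_{t}|\F_{u}] = \ex[N_{t}]+\int_{0}^{u}\ex[D_{s}N_{t}|\F_{s}]\cdot d\br_{s},\quad 0\le u\le 1.
\end{align*}
I then apply It\^o's formula to $g(t,\cdot)$ composed with this martingale between $u=0$ and $u=1$, and take expectations; the stochastic-integral term vanishes thanks to the bounds \eqref{;bounds}, leaving
\begin{align*}
 \ex[g(t,N_{t})] - g(t,\ex[N_{t}]) = \frac{1}{2}\int_{0}^{1}\ex\!\left[g_{yy}(t,\ex[N_{t}|\F_{s}])|\ex[D_{s}N_{t}|\F_{s}]|^{2}\right]ds.
\end{align*}

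Finally, it remains to identify $g_{yy}(t,y)$ with $\U(t,\sv(t,y))$. A direct chain-rule computation using $\sv_{x}(t,y)=1/\su_{x}(t,\sv(t,y))$ yields $g_{y}(t,y)=(\su_{tx}/\su_{x})(t,\sv(t,y))$, and differentiating once more gives $g_{yy}(t,y)=\{(\su_{tx}/\su_{x})_{x}\cdot(1/\su_{x})\}(t,\sv(t,y))$, which is precisely $\U(t,\sv(t,y))$ by the definition \eqref{;defU}. Substituting this into the preceding display, multiplying by $2$, and combining with \eqref{;derisv} yields \eqref{;eqdiff}. The main obstacle, which \eqref{;bounds} handles routinely, is checking that the closed martingale $u\mapsto \ex[N_{t}|\F_{u}]$ remains in a compact subinterval of the image of $\su(t,\cdot)$, so that It\^o's formula applies to the composition with $g(t,\cdot)$ and the resulting stochastic integrals are genuine martingales.
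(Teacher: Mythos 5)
Your proof is correct and follows essentially the same route as the paper's: starting from \lref{;lderisv}, using the Clark--Ocone representation of $\ex[N_t|\F_\tau]$, applying It\^o's formula to $\su_t(t,\sv(t,\cdot))$ composed with that martingale, and recognizing the second $y$-derivative as $\U(t,\sv(t,\cdot))$. The only cosmetic difference is that you name the auxiliary function $g(t,y)=\su_t(t,\sv(t,y))$ explicitly, whereas the paper manipulates the composite directly.
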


\begin{proof}
 By the definitions of $\sv $ and $N_{t}$, we may rewrite 
 the integrand of the first term on the right-hand side of 
 \eqref{;diffn} as 
 \begin{align*}%%\label{;1stdiffn}
  \su _{t}(t,M_{t})&=\su _{t}\left( t,\sv (t,\ex [N_{t}|\F _{1}])\right) . 
 \end{align*}
 We apply It\^o's formula to the process 
 $\su _{t}\left( t,\sv (t,\ex [N_{t}|\F _{\tau }])\right) ,\,0\le \tau \le 1$, 
 noting the Clark-Ocone formula (see \eqref{;co}) for 
 $\ex [N_{t}|\F _{\tau }]$: 
 \begin{align*}
  \ex [N_{t}|\F _{\tau }]
  =\ex [N_{t}]+\int _{0}^{\tau }
  \ex [ 
  D_{s}N_{t}|\F _{s}
  ] \cdot d\br _{s}, 
  \quad 0\le \tau \le 1,\ \text{$\pr $-a.s.}
 \end{align*}
 Then it holds that $\pr $-a.s., 
 \begin{align*}
  &\su _{t}\left( t,\sv (t,\ex [N_{t}|\F _{\tau }])\right) \\
  &=\su _{t}\left( t,\sv (t,\ex [N_{t}])\right) 
  +\int _{0}^{\tau }
  \frac{\su _{tx}}{\su _{x}}(t,\sv (t,\ex [N_{t}|\F _{s}]))
  \ex [D_{s}N_{t}|\F _{s}]\cdot d\br _{s}\\
  &\quad +\frac{1}{2}\int _{0}^{\tau }
  \U (t,\sv (t,\ex [N_{t}|\F _{s}]))
  \left| \ex [D_{s}N_{t}|\F _{s}]\right| ^{2}\,ds 
 \end{align*}
 for all $0\le \tau \le 1$. Here we used the fact that 
 $\sv _{x}(t,x)=1/\su _{x}(t,\sv (t,x))$. 
 Taking the expectation on both sides (again the stochastic integral is a 
 true martingale thanks to the boundedness \eqref{;bounds} of $f$) and 
 putting $\tau =1$, we have 
 \begin{align}
  &\ex \left[ 
  \su _{t}(t,M_{t})
  \right] \notag \\
   &=\su _{t}(t,\sv (t,\ex [N_{t}]))
   +\frac{1}{2}\int _{0}^{1}
  \ex \left[ 
  \U \!\left( t,\sv (t,\ex [N_{t}|\F _{s}])\right) 
  \left| \ex [D_{s}N_{t}|\F _{s}]\right| ^{2}
  \right] ds, \label{;coappli}
 \end{align}
 where we used Fubini's theorem for the last term. Plugging 
 \eqref{;coappli} into \eqref{;derisv}, we arrive at the 
 conclusion. 
\end{proof}

%%%%%% New Section %%%%%%
\section{Proof of \pref{;pequiv}}\label{;sprfp}

This section is devoted to the proof of \pref{;pequiv}. We start with 
the proof of the fact that the property that \eqref{;ehc} holds for any 
$f\in \lp{1}{\gss{\D }}$ with $e^{f}\in \lp{1}{\gss{\D }}$, is necessary 
for \eqref{;hc} to hold for any $p>1$ and $f\in \lp{p}{\gss{\D }}$. 

\begin{lem}\label{;lnecessity}
 \eqref{;hc} implies \eqref{;ehc}. 
\end{lem}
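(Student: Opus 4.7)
The plan is to derive \eqref{;ehc} from \eqref{;hc} by applying hypercontractivity to the auxiliary function $g_{p}:=e^{f/p}$ for $p>1$ and then letting $p\to \infty$. Since $e^{f}\in \lp{1}{\gss{\D }}$, we have $g_{p}\in \lp{p}{\gss{\D }}$ with $\norm{g_{p}}{p}=\norm{e^{f}}{1}^{1/p}$, so \eqref{;hc} is applicable with exponent $q(t)=e^{2t}(p-1)+1$.

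First I would invoke Jensen's inequality: because $\ou _{t}$ averages against the probability measure $\gss{\D }$ and $e^{x}$ is convex,
\begin{align*}
 \ou _{t}g_{p}=\ou _{t}\!\left( e^{f/p}\right)
 \ge \exp \!\left( \frac{\ou _{t}f}{p}\right) \quad \gss{\D }\text{-a.e.}
\end{align*}
Raising to the $q(t)$-th power, integrating against $\gss{\D }$, and applying \eqref{;hc} to $g_{p}$ then give
\begin{align*}
 \int _{\R ^{\D }}\exp \!\left( \frac{q(t)}{p}\,\ou _{t}f\right) d\gss{\D }
 \le \int _{\R ^{\D }}(\ou _{t}g_{p})^{q(t)}\,d\gss{\D }
 \le \norm{g_{p}}{p}^{q(t)}
 =\norm{e^{f}}{1}^{q(t)/p}.
\end{align*}

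Next I would pass to the limit $p\to \infty $. A direct computation gives $q(t)/p=e^{2t}-(e^{2t}-1)/p$, which increases to $e^{2t}$; hence the integrand on the left converges pointwise $\gss{\D }$-a.e.\ to $\exp (e^{2t}\ou _{t}f)$, while the right-hand side converges to $\norm{e^{f}}{1}^{e^{2t}}$. Applying Fatou's lemma on the left yields
\begin{align*}
 \int _{\R ^{\D }}\exp (e^{2t}\ou _{t}f)\,d\gss{\D }\le \norm{e^{f}}{1}^{e^{2t}},
\end{align*}
and taking the $e^{2t}$-th root produces \eqref{;ehc}.

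I do not foresee a substantive obstacle: the only ingredients are the pointwise Jensen bound, which is immediate from the averaging form of $\ou _{t}$, and a routine application of Fatou's lemma (valid because the integrands are nonnegative) to handle the limit in $p$. The identity $\lim_{p\to \infty }q(t)/p=e^{2t}$ is precisely what matches the exponent appearing in \eqref{;ehc}.
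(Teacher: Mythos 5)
Your proof is correct and follows essentially the same route as the paper: define $g_p=e^{f/p}$, apply Jensen's inequality pointwise to get $Q_t g_p \ge \exp\{(1/p)Q_t f\}$, invoke \eqref{;hc} for $g_p$, and let $p\to\infty$ using $q(t)/p\to e^{2t}$. Your explicit appeal to Fatou's lemma is a welcome extra detail that the paper leaves implicit, but it does not change the argument.
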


\begin{proof}
Fix $t\ge 0$ and let $f\in \lp{1}{\gss{\D }}$ be such that 
$e^{f}\in \lp{1}{\gss{\D }}$. 
Fix $p>1$ arbitrarily and set $g=e^{f/p}$. By Jensen's inequality, 
$\ou _{t}g\ge \exp \left\{ (1/p)\ou _{t}f\right\} $ 
$\gss{\D }$-a.e. The 
hypercontractivity \eqref{;hc} applied to $g$ yields 
$\norm{\ou _{t}g}{q(t)}\le \|e^{f}\|_{1}^{1/p}$. Combining these 
two inequalities, we have  
\begin{align*}
 \norm{\exp \left\{ (1/p)\ou _{t}f\right\} }{q(t)}^{p}
 \le \|e^{f}\|_{1}  
\end{align*}
for any $p>1$. Noting that $q(t)/p\to e^{2t}$ as $p\to \infty $, we let 
$p\to \infty $ on the left-hand side of the above inequality to 
conclude \eqref{;ehc}. 
\end{proof}

We turn to the sufficiency. As mentioned in \sref{;intro}, we use 
the fact (\cite{gro}) that the hypercontractivity \eqref{;hc} 
is equivalent to the logarithmic Sobolev inequality \eqref{;lsi}. 
Thanks to the equivalence, \pref{;pequiv} immediately follows once 
we show the following lemma: 

\begin{lem}\label{;lsufficiency} 
 \eqref{;ehc} implies \eqref{;lsi}. 
\end{lem}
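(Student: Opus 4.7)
The plan is to differentiate both sides of \eqref{;ehc} at $t=0$, where equality holds: since $\ou _{0}f=f$, both sides equal $\norm{e^{f}}{1}$. The monotonicity in $t$ then forces a first-order condition which, via the substitution $f=2\log g$, is precisely \eqref{;lsi}.

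By standard density arguments it suffices to derive \eqref{;lsi} for a positive $g\in \C{\R ^{\D }}$; set $f:=2\log g$, so $f\in C^{1}_{b}(\R ^{\D })$ and \eqref{;ehc} applies. Put
\begin{align*}
 A(t) := \int _{\R ^{\D }}\exp \!\left( e^{2t}\ou _{t}f\right) d\gss{\D },
 \qquad
 \Phi (t) := A(t)^{e^{-2t}},
\end{align*}
so that $\Phi (0)=A(0)=\norm{e^{f}}{1}$ and \eqref{;ehc} reads $\Phi (t)\le \Phi (0)$ for all $t\ge 0$. Hence $(\log \Phi )'(0^{+})\le 0$, which, by the chain rule, is equivalent to
\begin{align*}
 A'(0) \le 2A(0)\log A(0).
\end{align*}

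Differentiating $A$ under the integral at $t=0$ and using $(\partial /\partial t)\ou _{t}f|_{t=0}=\Delta f-x\cdot \nabla f$ (the OU generator applied to $f$) together with the integration-by-parts identity $\int _{\R ^{\D }}e^{f}(\Delta f-x\cdot \nabla f)\,d\gss{\D }=-\int _{\R ^{\D }}e^{f}|\nabla f|^{2}\,d\gss{\D }$, one finds
\begin{align*}
 A'(0) = 2\int _{\R ^{\D }}fe^{f}\,d\gss{\D }-\int _{\R ^{\D }}e^{f}|\nabla f|^{2}\,d\gss{\D }.
\end{align*}
Plugging this into the previous inequality and reverting to $g=e^{f/2}$ — so that $e^{f}=g^{2}$, $e^{f}|\nabla f|^{2}=4|\nabla g|^{2}$, $\norm{e^{f}}{1}=\norm{g}{2}^{2}$, and $\int _{\R ^{\D }}fe^{f}\,d\gss{\D }=2\int _{\R ^{\D }}g^{2}\log g\,d\gss{\D }$ — yields \eqref{;lsi} for $g$ after dividing by $4$. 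The extension to a general weakly differentiable $f$ (with $|f|^{2}\log |f|$ on the left) is standard: take $g=|f|$ and use $|\nabla |f||\le |\nabla f|$ $\gss{\D }$-a.e.

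The main obstacle is justifying the differentiation of $A$ at $t=0$: while the smoothing property of $\ou _{t}$ supplies ample regularity for $t>0$, at $t=0$ one needs $(d/dt)\ou _{t}f|_{t=0}=\Delta f-x\cdot \nabla f$ with enough uniformity to exchange limits and integrals. For $f$ additionally in $C^{2}_{b}(\R ^{\D })$ this follows from a Taylor expansion of $f\!\left( e^{-t}x+\sqrt{1-e^{-2t}}y\right) $ combined with dominated convergence, and one then extends back to $\C{\R ^{\D }}$ by approximation; alternatively, the Brownian representation of \sref{;spre} — writing $A(t)=\ex \!\left[ \exp (e^{2t}M_{e^{-2t}})\right] $ via \eqref{;idenlaw} — permits the computation of $A'(0)$ to be carried out rigorously through the It\^o-formula development \eqref{;bif}--\eqref{;diffn}.
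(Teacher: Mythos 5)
Your argument is correct and pursues the same core idea as the paper — monotonicity of the exponential-hypercontractivity functional at $t=0$ yields \eqref{;lsi} after the substitution $f=2\log g$ — but you carry out the derivative computation by a different route. You differentiate $A(t)=\int \exp(e^{2t}\ou_t f)\,d\gss{\D}$ directly under the integral, invoke the Ornstein–Uhlenbeck generator $L=\Delta-x\cdot\nabla$, and use the Dirichlet-form identity $\int e^{f}Lf\,d\gss{\D}=-\int e^{f}|\nabla f|^{2}\,d\gss{\D}$; this is the classical Gross-style derivation. The paper instead transfers the inequality to the Brownian side via \eqref{;idenlaw}, reformulates the endpoint condition as $\frac{d}{dt}\sv(t,\ex[N_{t}])\big|_{t=1}\ge 0$ with $\su(t,x)=\exp(x/t)$, and extracts the derivative through \lref{;lderisv}, i.e.\ through the Clark–Ocone/It\^o machinery of \sref{;spre} — the reason being that this same machinery is what powers the more general \tref{;tunif}, so reusing it keeps the paper self-contained. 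Your version requires slightly more smoothness ($C^{2}$) to apply the generator pointwise, which you correctly flag and resolve by approximation or by falling back on the Brownian representation; the paper's lemmas are stated for $f\in C^{1}_{b}$ and sidestep that. Both are sound; your account is the more elementary and recognizable one, the paper's is the one that generalizes.
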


\begin{proof}[Proof of \pref{;pequiv}]
 \lref{;lsufficiency} indicates that \eqref{;ehc} is sufficient 
 for \eqref{;hc} to hold. Combining this fact with \lref{;lnecessity}, 
 we have the proposition. 
\end{proof} 

It remains to prove \lref{;lsufficiency}. We recall the well-known fact that 
taking the derivative of the left-hand side of \eqref{;hc} at $t=0$ leads to 
the logarithmic Sobolev inequality \eqref{;lsi}; the same argument works for 
\eqref{;ehc} as well. 

\begin{proof}[Proof of \lref{;lsufficiency}] 
By density arguments, it suffices to prove \eqref{;lsi} for any 
$f\in \C{\R ^{\D }}$ (for the notation, see the end of \sref{;intro}). 
Pick such an $f$ arbitrarily 
and set $g=2\log f\in C_{b}^{1}(\R ^{\D })$. In view of \eqref{;idenlaw} 
with $g$ replacing $f$ therein, \eqref{;ehc} is restated as 
\begin{align*}
 t\log \ex \left[ 
 \exp \left\{ 
 (1/t)M_{t}(g)
 \right\} 
 \right] \le \log \ex \left[ \exp \{ M_{1}(g)\} \right] 
 \quad \text{for all }0<t\le 1, 
\end{align*}
which in particular entails that 
\begin{align*}%%\label{;deriposi}
 %%\frac{d}{dt}\sv \left( t,\ex [\su (t,M_{t}(g))]\right) 
 \frac{d}{dt}\sv (t,\ex [N_{t}])
 \bigg| _{t=1}\ge 0
\end{align*}
with $N_{t}=\su (t,M_{t}(g))$. Here we set $\su (t,x)=\exp (x/t)$ 
for $(t,x)\in (0,1]\times \R $ with $\sv (t,x)=t\log x$ the inverse function of 
$\su (t,\cdot )$ as in the notation of \sref{;spre}.  
Observe that by choosing $l=-\infty $ and $r=\infty $, the lemmas 
in the previous section are applicable to $g$ and $\su $; in particular, 
we may apply \lref{;lderisv} to see that the last inequality is rewritten as 
\begin{align*}
 -\su _{t}\bigl( 1,\sv (1,\ex [N_{1}])\bigr) 
 +\ex \left[ 
 \su _{t}(1,M_{1}(g))
 \right] +\frac{1}{2}\ex \left[ 
 \su _{xx}(1,M_{1}(g))\left| \nabla g(\br _{1})\right| ^{2}
 \right] \ge 0 
\end{align*}
by the positivity of $\su _{x}$ and by the definition \eqref{;defv} of $\dv $. 
Therefore by the definition of $\su $, we obtain 
\begin{align*}
 \ex \left[ e^{g(\br _{1})}\right] \log \ex \left[ e^{g(\br _{1})}\right] 
 -\ex \left[ 
 g(\br _{1})e^{g(\br _{1})}
 \right] +\frac{1}{2}\ex \left[ 
 e^{g(\br _{1})}\left| \nabla g(\br _{1})\right| ^{2}
 \right] \ge 0. 
\end{align*}
Substituting $g=2\log f$ leads to \eqref{;lsi} and ends the proof. 
\end{proof}

%%In the remainder of the section, we explain how....

%%%%%% New Section %%%%%%
\section{Proof of \tref{;tunif}}\label{;sprft} 
In this section we prove \tref{;tunif}. 
On account of the identity \eqref{;idenlaw} in law, the theorem follows 
once we show the 

\begin{prop}\label{;punif}
 For a function $\cf $ on $(0,\infty )$ satisfying the assumptions 
 in \tref{;tunif}, set 
 \begin{align}
  \su (t,x)=\int _{0}^{x}\cf (y)^{1/t}\,dy, \quad 
  0<t\le 1,\ x>0. \label{;defsu}
 \end{align}
 Then for any nonnegative, measurable function $f$ on $\R ^{\D }$ 
 satisfying \eqref{;intf}, we have 
 \begin{align}\label{;pgenhc}
  \sv \left( t,\ex [\su (t,M_{t}(f))] \right) 
  \le \sv \left( 1,\ex [\su (1,M_{1}(f))]\right) 
  \quad \text{for all }0<t\le 1. 
 \end{align}
 Here for every $0<t\le 1$, we denote by $\sv (t,\cdot )$ the inverse 
 function of $\su (t,x),\,x>0$, as in preceding sections. 
\end{prop}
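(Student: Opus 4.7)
The plan is to establish \eqref{;pgenhc} by showing that $\Phi(t) := \sv(t, \ex[\su(t, M_t(f))])$ is nondecreasing on $(0, 1]$; evaluating at $t = 1$ on the right-hand side then yields the result. I would first treat $f \in \C{\R^{\D}}$, so that the It\^o-calculus framework of \sref{;spre} applies with $l = 0$ and $r = \infty$, and pass to general $f$ obeying \eqref{;intf} by approximation afterwards.

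Setting $\pi := \cf/\cf'$ and differentiating \eqref{;defsu} gives
\[
\su_x(t,x) = \cf(x)^{1/t}, \qquad
\su_{xx}(t,x) = \frac{\su_x(t,x)}{t\pi(x)}, \qquad
\U(t,x) = -\frac{1}{t^{2} \pi(x)\,\su_x(t,x)},
\]
the last via \eqref{;defU} using $\su_{tx}/\su_x = -(\log \cf)/t^{2}$. By \lref{;ldiff}, since $\su_x > 0$, the sign of $\Phi'(t)$ agrees with that of
\[
J(t) := \int_0^1 \ex\!\left[\U(t, \Phi_s)\,|v_s|^2\right] ds + \ex\!\left[\su_{xx}(t, M_t)\,|\dv_t|^2\right],
\]
with $N_t := \su(t,M_t)$, $\Phi_s := \sv(t, \ex[N_t|\F_s])$, and $v_s := \ex[D_sN_t|\F_s]$. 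The $\F_t$-measurability of $N_t$ together with Clark-Ocone implies $v_s = 0$ for $s > t$ and $v_s = \ex[\su_x(t, M_t)\dv_t|\F_s]$ for $s \le t$. Since $\U < 0$, the task is to show that the nonnegative $\su_{xx}$-term dominates the nonpositive $\U$-integral.

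The core step combines conditional Cauchy-Schwarz with a Jensen-type bound. Splitting $\su_x\dv_t = \sqrt{\su_x\pi}\cdot\dv_t\sqrt{\su_x/\pi}$ gives
\[
|v_s|^2 \le \ex\!\left[(\su_x\pi)(t, M_t)\,\big|\,\F_s\right]\cdot \ex\!\left[\su_x(t, M_t)\,|\dv_t|^2/\pi(M_t)\,\big|\,\F_s\right],
\]
the second factor being tailored to match $\su_{xx}|\dv_t|^2$ via $\su_{xx} = \su_x/(t\pi)$. The crucial ingredient is then
\[
\ex\!\left[(\su_x\pi)(t, M_t)\,\big|\,\F_s\right] \le (\su_x\pi)(t, \Phi_s),
\]
which is conditional Jensen applied at $N_t$ to the function $h(z) := (\su_x\pi)(t, \sv(t,z))$, provided $h$ is concave. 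Writing $F(y) := \cf(y)^{1+1/t}/\cf'(y)$, a direct calculation shows that $h''(\su(t,y))$ has the sign of $F''(y) - F'(y)/(t\pi(y))$, and upon simplification this expression equals $\cf(y)^{1/t}\pi''(y)$. Hence $h$ is concave iff $\pi$ is, which is exactly \eqref{;cond1}. This equivalence is the main obstacle: it is the single place where condition \eqref{;cond1} is essentially exploited.

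To finish, multiplying the Cauchy-Schwarz bound by $-\U(t,\Phi_s)\ge 0$ and using the identity $(-\U)(\su_x\pi)(t,\cdot) \equiv 1/t^2$ collapses everything to
\[
(-\U)(t,\Phi_s)\,|v_s|^2 \le \frac{1}{t^2}\, \ex\!\left[\su_x(t, M_t)\,|\dv_t|^2/\pi(M_t)\,\big|\,\F_s\right].
\]
Taking outer expectation, integrating over $s \in [0, t]$, and using $\su_{xx}=\su_x/(t\pi)$ gives $\int_0^t \ex[(-\U)(t,\Phi_s)|v_s|^2]\,ds \le \ex[\su_{xx}(t, M_t)|\dv_t|^2]$, so $J(t) \ge 0$ and $\Phi$ is nondecreasing. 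For general nonnegative measurable $f$ satisfying \eqref{;intf}, I would conclude by approximating $f$ by a sequence in $\C{\R^{\D}}$ and passing to the limit using monotone/dominated convergence and the continuity of $\sv(t,\cdot)$.
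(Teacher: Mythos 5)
Your proof is correct and follows essentially the same route as the paper: your $h(z)=(\su_x\pi)(t,\sv(t,z))$ is the paper's auxiliary function $\vp$ from \eqref{;defphi}--\eqref{;rewriphi} rescaled by $t^2$, so your concavity claim is exactly \lref{;lconcave}, and your conditional Cauchy--Schwarz bound on $|v_s|^2$ is the complete-the-square identity \eqref{;condexp1} followed by conditional Jensen in \lref{;llbd}, after which the algebraic cancellation giving $J(t)\ge 0$ is identical. The only minor caveat is that since \eqref{;cond1} assumes only $\cf\in C^1$ with $\cf/\cf'$ concave (not necessarily $C^2$), your reduction of $h''$ to $\cf^{1/t}\pi''$ is formal; the paper handles this by computing the one-sided derivatives of $\vp$ in \eqref{;deriphip}--\eqref{;deriphim} and invoking monotonicity of $(\cf/\cf')'_{\pm}$, and your argument should be read the same way.
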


\begin{proof}[Proof of \tref{;tunif}]
 On noting the identity 
 \begin{align*}
  \uf (t,x)=\su (e^{-2t},x) \quad \text{for all $t\ge 0$ and $x>0$}, 
 \end{align*}
 with a common function $\cf $, the assertion of \tref{;tunif} is immediate 
 from that of \pref{;punif} and the identity \eqref{;idenlaw}. 
\end{proof}

It remains to prove \pref{;punif}. 
To this end, we assume first that $f$ is in $\C{\R ^{\D }}$. 
This assumption will be removed later by density arguments. 
Note that the assumption on $f$ and the definition of $\su $ allow us 
to apply the lemmas in \sref{;spre} by choosing $l=0$ and $r=\infty $; 
in particular, the identity \eqref{;eqdiff} holds true for the 
above pair of $f$ and $\su $, from which we start the proof of the 
proposition. Set 
\begin{align}\label{;defphi}
 \vp (t,x)
 :=-\frac{1}{\U (t,\sv (t,x))}, \quad 0<t\le 1,\ x>0. 
\end{align}

\begin{lem}\label{;lconcave}
 For every $0<t\le 1$, 
 the function $(0,\infty )\ni x\mapsto \vp (t,x)$ 
 is positive and concave. 
\end{lem}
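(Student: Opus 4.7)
The plan is to reduce the claim to the hypothesis \eqref{;cond1} by direct computation, using the explicit form of $\su$. Since $\su_{x}(t,x)=\cf(x)^{1/t}$, a short calculation gives
\begin{align*}
 \frac{\su_{tx}}{\su_{x}}(t,x)=-\frac{\log \cf(x)}{t^{2}},
 \qquad
 \U(t,x)=-\frac{\cf'(x)}{t^{2}\,\cf(x)^{1+1/t}}.
\end{align*}
Since $\cf,\cf'>0$ on $(0,\infty)$, we have $\U<0$ throughout, and the defining formula \eqref{;defphi} immediately yields
\begin{align*}
 \vp(t,x)=\frac{t^{2}\,\cf(y)^{1+1/t}}{\cf'(y)}=t^{2}\,\cf(y)^{1/t}\,\frac{\cf}{\cf'}(y),
 \qquad y=\sv(t,x),
\end{align*}
which is positive. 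This settles the positivity part at once.

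For the concavity, the key observation is that the factors align nicely with $\su_{x}$. Writing $g(y):=(\cf/\cf')(y)$ and differentiating $\vp(t,x)=t^{2}\,\su_{x}(t,y)\,g(y)$ in $x$ via $\sv_{x}=1/\su_{x}$, the term involving $\partial_{y}\su_{x}(t,y)\cdot g(y)$ simplifies using the identity $\cf'(y)g(y)=\cf(y)$; specifically,
\begin{align*}
 \partial_{x}\bigl[\su_{x}(t,y)\,g(y)\bigr]
 =\frac{1}{\su_{x}(t,y)}\bigl\{(\cf^{1/t})'(y)\,g(y)+\cf(y)^{1/t}\,g'(y)\bigr\}
 =\tfrac{1}{t}+g'(y).
\end{align*}
Hence $\vp_{x}(t,x)=t+t^{2}g'(y)$, and differentiating once more gives
\begin{align*}
 \vp_{xx}(t,x)=\frac{t^{2}\,g''(y)}{\cf(y)^{1/t}}.
\end{align*}

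The concavity assumption in \eqref{;cond1} says precisely that $g''\le 0$ on $(0,\infty)$; combined with $\cf(y)^{1/t}>0$, this yields $\vp_{xx}(t,x)\le 0$, completing the proof. There is no serious obstacle here: the whole argument is a direct computation, and the only nontrivial ingredient is the algebraic cancellation $\cf'\cdot(\cf/\cf')=\cf$, which is what makes the first derivative $\vp_{x}$ collapse to an expression whose $x$-derivative manifestly inherits the sign of $(\cf/\cf')''$.
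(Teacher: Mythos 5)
Your computation is correct as far as it goes, and up through the first derivative it is essentially the paper's argument: the identity $\vp_{x}(t,x)=t+t^{2}g'(\sv(t,x))$ with $g=\cf/\cf'$ is exactly what the paper obtains in \eqref{;deriphip}--\eqref{;deriphim} (expressed there with one-sided derivatives). The gap is in the final step, where you pass to $\vp_{xx}$ and invoke $g''\le 0$. The hypothesis \eqref{;cond1} only asserts that $\cf\in C^{1}$, $\cf'>0$, and $\cf/\cf'$ is \emph{concave}; it does not say $\cf/\cf'\in C^{2}$, and a concave function need not be twice differentiable (indeed $\cf/\cf'$ is only guaranteed to be continuous here, with one-sided derivatives everywhere by concavity). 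So $g''$ need not exist, and the expression $\vp_{xx}(t,x)=t^{2}g''(y)/\cf(y)^{1/t}$ is not available in general.

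The fix is cheap and is exactly what the paper does: stop at the first derivative, but work with the one-sided derivatives $(\cf/\cf')'_{\pm}$, which exist because $\cf/\cf'$ is concave. You then have
\begin{align*}
 \lim_{h\to 0\pm}\frac{\vp(t,x+h)-\vp(t,x)}{h}
 =t+t^{2}\,\Bigl(\frac{\cf}{\cf'}\Bigr)'_{\pm}\!\bigl(\sv(t,x)\bigr),
\end{align*}
and since $(\cf/\cf')'_{\pm}$ are nonincreasing (again by concavity of $\cf/\cf'$) and $\sv(t,\cdot)$ is increasing, the right-hand sides are nonincreasing in $x$; concavity of $\vp(t,\cdot)$ follows from this monotonicity of one-sided derivatives without ever differentiating a second time. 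Your algebraic cancellation observation ($\cf'\cdot(\cf/\cf')=\cf$) is the right key and is what both arguments rely on; you just need to be careful not to assume more smoothness than \eqref{;cond1} provides.
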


\begin{proof}
 Noting $(\su _{tx}/\su _{x})(t,x)=-(1/t^{2})\log \cf (x)$, 
 we see that $\U $ is expressed as 
 \begin{align*}
  \U (t,x)=-\frac{1}{t^{2}}
  \frac{\cf '(x)}{\cf (x)}\frac{1}{\{ \cf (x)\} ^{1/t}}
 \end{align*}
 by the definition \eqref{;defU} of $\U $. 
 Therefore we have the expression 
 \begin{align}\label{;rewriphi}
  \vp (t,x)
  =t^{2}\frac{\cf (\sv (t,x))}{\cf '(\sv (t,x))}\{ \cf (\sv (t,x))\} ^{1/t}. 
 \end{align}
 The positivity is obvious because $\cf $ and $\cf '$ are positive. 
 To check the concavity, note that $\vp (t,x)$ is both right- and 
 left-differentiable with respect to $x$ since $\cf /\cf '$ is 
 concave and $\sv (t,x)$ is strictly increasing and differentiable 
 with respect to $x$; in fact, if we denote by 
 $(\cf /\cf ')'_{+}$ (resp.\ $(\cf /\cf ')'_{-}$) 
 its right-(resp.\ left-)derivative, then  
 \begin{align}
  \frac{1}{t^{2}}\lim _{h\to 0+}
  \frac{\vp (t,x+h)-\vp (t,x)}{h}
  &=\left( \frac{\cf }{\cf '}\right) '_{+}\!(\sv (t,x))
  \sv _{x}(t,x)\cf (\sv (t,x))^{1/t} \notag \\
  &\quad +\frac{\cf }{\cf '}(\sv (t,x))
  \times \frac{1}{t}\cf (\sv (t,x))^{1/t-1}
  \cf '(\sv (t,x))\sv _{x}(t,x) \notag \\
  &=\left( \frac{\cf }{\cf '}\right) '_{+}\!(\sv (t,x))
  +\frac{1}{t}, \label{;deriphip}
 %%\end{align*}
 \intertext{where the second equality follows from the fact that 
 $\sv _{x}(t,x)=\cf (\sv (t,x))^{-1/t}$; in the same way,}
 %%\begin{align*}
  \frac{1}{t^{2}}\lim _{h\to 0-}
  \frac{\vp (t,x+h)-\vp (t,x)}{h}
  &=\left( \frac{\cf }{\cf '}\right) '_{-}\!(\sv (t,x))
  +\frac{1}{t}. \label{;deriphim}
 \end{align}
 From these identities, the concavity follows because 
 their right-hand sides are nonincreasing in $x$ by the 
 concavity assumption on $\cf /\cf '$. 
\end{proof}

Thanks to the above lemma, we have the following lower bound 
for the expectation in the first term on the right-hand side 
of \eqref{;eqdiff}: 
\begin{lem}\label{;llbd}
 It holds that for every $0<t\le 1$ and $0\le s\le 1$, 
 \begin{align}\label{;eqlbd}
  \ex \left[ 
  \U \!\left( t,\sv (t,\ex [N_{t}|\F _{s}])\right) 
  \left| \ex [D_{s}N_{t}|\F _{s}]\right| ^{2}
  \right] 
  \ge 
  -\ex \left[ 
  \frac{|D_{s}N_{t}|^{2}}{\vp (t,N_{t})}
  \right] . 
 \end{align}
\end{lem}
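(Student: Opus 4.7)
The plan is to recast \eqref{;eqlbd} via the defining relation $\U(t,\sv(t,x))=-1/\vp(t,x)$ from \eqref{;defphi} and then reduce the resulting inequality to a conditional Jensen's inequality. Setting $x=\ex[N_{t}|\F_{s}]$ in that identity, the left-hand side of \eqref{;eqlbd} becomes $-\ex[|\ex[D_{s}N_{t}|\F_{s}]|^{2}/\vp(t,\ex[N_{t}|\F_{s}])]$, so the claim is equivalent to
\begin{align*}
 \ex\!\left[\frac{|\ex[D_{s}N_{t}|\F_{s}]|^{2}}{\vp(t,\ex[N_{t}|\F_{s}])}\right]\le \ex\!\left[\frac{|D_{s}N_{t}|^{2}}{\vp(t,N_{t})}\right].
\end{align*}

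The reduction I would pursue is via the function $\Phi(x,y):=|y|^{2}/\vp(t,x)$ on $(0,\infty)\times \R^{\D}$. Since $f\in\C{\R^{\D}}$ is bounded away from $0$, $M_{t}(f)$ and hence $N_{t}$ are strictly positive and bounded, so $(N_{t},D_{s}N_{t})$ is a genuine random variable with values in that domain and $\ex[N_{t}|\F_{s}]$ remains in the range of $\su(t,\cdot)$, which keeps $\sv$ well-defined at that point. Once $\Phi$ is shown to be jointly convex there, conditional Jensen's inequality gives
\begin{align*}
 \ex\!\left[\Phi(N_{t},D_{s}N_{t})\,\big|\,\F_{s}\right]\ge \Phi\!\left(\ex[N_{t}|\F_{s}],\ex[D_{s}N_{t}|\F_{s}]\right),
\end{align*}
and taking expectations yields the display above.

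For the joint convexity of $\Phi$ I would combine two ingredients: by \lref{;lconcave}, $\vp(t,\cdot)$ is positive and concave on $(0,\infty)$; and the standard perspective map $(u,y)\mapsto |y|^{2}/u$ is jointly convex on $(0,\infty)\times \R^{\D}$ and decreasing in $u$. For $(x_{i},y_{i})\in(0,\infty)\times \R^{\D}$ and $\lambda\in[0,1]$, concavity of $\vp$ combined with monotonicity in $u$ dominates $\Phi$ at the convex combination by $|\lambda y_{1}+(1-\lambda)y_{2}|^{2}/(\lambda\vp(t,x_{1})+(1-\lambda)\vp(t,x_{2}))$, and joint convexity of $|y|^{2}/u$ applied to the pairs $(\vp(t,x_{i}),y_{i})$ then dominates this in turn by $\lambda|y_{1}|^{2}/\vp(t,x_{1})+(1-\lambda)|y_{2}|^{2}/\vp(t,x_{2})$. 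The main obstacle is recognising this perspective-function convexity pattern: because \lref{;lconcave} supplies only concavity (not smoothness) of $\vp$, working at the level of convex-combination inequalities rather than through Hessian computations is essential, and once $\Phi$ has been identified as the composition of a concave map with the convex kernel $|y|^{2}/u$, the rest is a one-line application of conditional Jensen's.
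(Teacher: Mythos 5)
Your proof is correct but proceeds by a genuinely different route than the paper's. Both begin identically, rewriting the left-hand side of \eqref{;eqlbd} via $\U(t,\sv(t,x))=-1/\vp(t,x)$ so that the target becomes
\begin{align*}
 \ex\!\left[\frac{|\ex[D_{s}N_{t}|\F_{s}]|^{2}}{\vp(t,\ex[N_{t}|\F_{s}])}\right]
 \le \ex\!\left[\frac{|D_{s}N_{t}|^{2}}{\vp(t,N_{t})}\right].
\end{align*}
From there the paper writes down an explicit algebraic identity, the expectation of the weighted quadratic form
\begin{align*}
 \vp(t,N_{t})\left|\frac{D_{s}N_{t}}{\vp(t,N_{t})}-\frac{\ex[D_{s}N_{t}|\F_{s}]}{\vp(t,\ex[N_{t}|\F_{s}])}\right|^{2},
\end{align*}
expanding the square in \eqref{;condexp1}; the nonnegativity of that form plus the scalar conditional Jensen inequality for the concave $\vp$ (applied only to $N_{t}$) then yields the inequality after cancellation. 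You instead absorb everything into a single function $\Phi(x,y)=|y|^{2}/\vp(t,x)$ and verify its joint convexity by composing the perspective kernel $|y|^{2}/u$ (jointly convex and decreasing in $u$) with the positive concave $\vp$; one application of the vector-valued conditional Jensen inequality then finishes the argument. Your convexity verification is sound at the level of convex-combination inequalities, which is the right level of rigor since \lref{;lconcave} does not supply smoothness of $\vp$. The two proofs are equivalent in content but differ in emphasis: your perspective-function observation is more conceptual and shorter, whereas the paper's explicit decomposition is what makes the identity \eqref{;condexp1} available for the exact error-term computations in \rref{;rpunif}\,\thetag{1} (the quantity $\Phi(t)$ there is precisely the expectation of that quadratic form), and also makes the transition to the reverse-hypercontractivity case in \sref{;srhc} transparent, where the same identity is reused with the inequality reversed when $\vp$ is negative and convex. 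Your argument is a valid and self-contained alternative for the lemma itself, but if you later need the reverse direction or the quantitative remainder, you will want the paper's identity rather than the bare Jensen estimate.
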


\begin{proof}
 By the definition \eqref{;defphi} of $\vp $, the left-hand side of 
 \eqref{;eqlbd} is written as 
 \begin{align}\label{;leqlbd}
  -\ex \left[ 
  \frac{\left| \ex [D_{s}N_{t}|\F _{s}]\right| ^{2}}
  {\vp (t,\ex [N_{t}|\F _{s}])}
  \right] .
 \end{align}
 Observe the identity 
 \begin{align}
  &\ex \left[ 
  \vp (t,N_{t})\left| 
  \frac{D_{s}N_{t}}{\vp (t,N_{t})}
  -\frac{\ex [D_{s}N_{t}|\F _{s}]}{\vp (t,\ex [N_{t}|\F _{s}])}
  \right| ^{2}\Bigg| \F _{s}
  \right] \notag \\
  &=\ex \left[ 
  \frac{|D_{s}N_{t}|^{2}}{\vp (t,N_{t})}\bigg| \F _{s}
  \right] 
  -2\frac{\left| \ex [D_{s}N_{t}|\F _{s}]\right| ^{2}}
  {\vp (t,\ex [N_{t}|\F _{s}])}
  +\ex \left[ 
  \vp (t,N_{t})|\F _{s}
  \right] 
  \frac{\left| \ex [D_{s}N_{t}|\F _{s}]\right| ^{2}}
  {\left\{ \vp (t,\ex [N_{t}|\F _{s}])\right\} ^{2}} 
  \quad \text{a.s. }\label{;condexp1}
 \end{align}
 Note that by \lref{;lconcave} and by the conditional Jensen 
 inequality, 
 \begin{align*}
  \ex \left[ 
  \vp (t,N_{t})|\F _{s}
  \right] \le \vp \left( t,\ex [N_{t}|\F _{s}]\right) \quad \text{a.s. }
 \end{align*}
 Plugging this estimate into the third term on the right-hand side of the 
 above identity and using the positivity of $\vp $, we have 
 \begin{align}
  0\le \ex \left[ 
  \frac{|D_{s}N_{t}|^{2}}{\vp (t,N_{t})}\bigg| \F _{s}
  \right] -\frac{\left| \ex [D_{s}N_{t}|\F _{s}]\right| ^{2}}
  {\vp (t,\ex [N_{t}|\F _{s}])} \quad \text{a.s. }\label{;condexp2}
 \end{align}
 Taking the expectation, we see that \eqref{;leqlbd} is bounded from 
 below by the right-hand side of \eqref{;eqlbd}, 
 %%\begin{align*}
 %% -\ex \left[ 
 %% \frac{|D_{s}N_{t}|^{2}}{\vp (t,N_{t})}
 %% \right] , 
 %%\end{align*}
 which ends the proof. 
\end{proof}

We are in a position to prove \pref{;punif}. 
\begin{proof}[Proof of \pref{;punif}]
 First let $f$ be as above, that is, suppose $f\in \C{\R ^{\D }}$. 
 By the definition \eqref{;defN} of $N_{t}$ and by the chain rule for the 
 Malliavin derivative $D$, 
 \begin{align*}
  D_{s}N_{t}=\su _{x}(t,M_{t})D_{s}M_{t}. 
 \end{align*}
 Since $M_{t}$ is written as 
 $
 M_{t}=\ex \left[ 
 f(W_{1-t}+x)
 \right] \!\big| _{x=\br _{t}}
 $, we see that 
 \begin{align*}
  D_{s}M_{t}&=\ind _{[0,t]}(s)\ex \left[ 
  \nabla f(\br _{1-t}+x)
  \right] \!\big| _{x=\br _{t}}
 \intertext{(recall $\nabla f$ is also assumed to be bounded), hence}
  &=\ind _{[0,t]}(s)\dv _{t}
 \end{align*}
 by the definition \eqref{;defv} of $\dv _{t}$. By combining these 
 and by the definition of $N_{t}$, 
 the right-hand side of \eqref{;eqlbd} is expressed as 
 \begin{align*}
  -\ind _{[0,t]}(s)
  \ex \left[ 
  \frac{(\su _{x}(t,M_{t}))^{2}}{\vp (t,\su (t,M_{t})) }|\dv _{t}|^{2}
  \right] . 
 \end{align*}
 By the last expression and by \lsref{;ldiff} and \ref{;llbd}, we have 
 for any $0<t\le 1$, 
 \begin{align}%%\label{;}
  &2\su _{x}\!\left( t,\sv (t,\ex [N_{t}])\right) \frac{d}{dt}
  \sv (t,\ex [N_{t}]) \notag \\
  &\ge 
  \ex \left[ 
  \left\{ 
  -t\frac{(\su _{x}(t,M_{t}))^{2}}{\vp (t,\su (t,M_{t}))}
  +\su _{xx}(t,M_{t})
  \right\} |\dv _{t}|^{2}
  \right] . \label{;bddiff}
 \end{align}
 Recall \eqref{;rewriphi} to note 
 that 
 \begin{align*}
  \vp (t,\su (t,x))=t^{2}\frac{\cf (x)}{\cf '(x)}\{ \cf (x)\} ^{1/t}. 
 \end{align*}
 We also note the expressions of $\su _{x}$ and $\su _{xx}$ 
 in terms of $\cf $: 
 \begin{align*}
  \su _{x}(t,x)=\cf (x)^{1/t}, \quad 
  \su _{xx}(t,x)=\frac{1}{t}\frac{\cf '(x)}{\cf (x)}\{ \cf (x)\} ^{1/t}. 
 \end{align*}
 From these expressions, it follows that for all $0<t\le 1$ and $x>0$, 
 \begin{align*}
  -t\frac{(\su _{x}(t,x))^{2}}{\vp (t,\su (t,x))}
  +\su _{xx}(t,x)&=
  \left( -t\times \frac{1}{t^{2}}+\frac{1}{t}\right) 
  \frac{\cf '(x)}{\cf (x)}\{ \cf (x)\} ^{1/t}\\
  &=0, 
 \end{align*}
 and hence by \eqref{;bddiff}, 
 \begin{align*}
  \frac{d}{dt}\sv (t,\ex [N_{t}])\ge 0 \quad \text{for any }0<t\le 1, 
 \end{align*}
 because $\su _{x}(t,x)$ is positive for all $0<t\le 1$ and $x>0$. 
 Consequently, we have proven \eqref{;pgenhc} when $f\in \C{\R ^{\D }}$. 
 
 The proof of \pref{;punif} is completed by density arguments. To this end, 
 let a measurable function $f$ on $\R ^{\D }$ be such that 
 \begin{align}\label{;bdf}
  \ve \le f\le K \quad \text{$\gss{\D }$-a.e.}
 \end{align}
 for some $0<\ve \le K<\infty $. Then we may choose a sequence 
$\{ f_{n}\} _{n=1}^{\infty }\subset C_{b}^{1}(\R ^{\D })$ such that 
\begin{align}\label{;l1conv}
 \lim _{n\to \infty }\ex \left[ \left| 
 f_{n}(\br _{1})-f(\br _{1})
 \right| \right] =0
\end{align}
and 
$
\ve \le f_{n}(x)\le  K
$ for all $n\ge 1$ and $x\in \R ^{\D }$. 
To see this, we fix $n$ arbitrarily. 
Since any measurable function on $\R ^{\D }$ is approximated by continuous 
functions in the sense of $\gss{\D }$-a.e.\ convergence (see, e.g., 
\cite[Theorem~V.16\,\thetag{a}]{doob}), we may pick a continuous 
function $g_{}$ in such a way that 
\begin{align*}
 \norm{f-g_{}}{1}<n^{-1} \quad 
 \text{and}\quad 
 \ve \le \inf _{x\in \R ^{\D }}g_{}(x)\le \sup _{x\in \R ^{\D }}g_{}(x)
 \le K. 
\end{align*}
Convoluting $g_{}$ with a mollifier on $\R ^{\D }$, we find 
a $\tilde{g}_{}$ in $C_{b}^{1}(\R ^{\D })$ 
(in fact, in $C_{b}^{\infty }(\R ^{\D })$) such that 
\begin{align*}
 \norm{g_{}-\tilde{g}_{}}{1}<n^{-1} \quad 
 \text{and}\quad 
 \ve \le \inf _{x\in \R ^{\D }}\tilde{g}_{}(x)\le 
 \sup _{x\in \R ^{\D }}\tilde{g}_{}(x)\le K. 
\end{align*}
Taking $f_{n}=\tilde{g}_{}$, we have a desired sequence since 
$\norm{f-f_{n}}{1}<2n^{-1}$ by triangular inequality. 
We have already seen that 
\eqref{;pgenhc} holds true for each $f_{n}$:  
\begin{align}\label{;pstrongn}
 \sv \left( 
 t,\ex \left[ 
 \su (t,\ex [f_{n}(\br _{1})|\F _{t}])
 \right] 
 \right) \le 
 \sv \left( 1,\ex \left[ 
 \su (1,f_{n}(\br _{1}))
 \right] \right) . 
\end{align}
By the definition of $\su $ and by the nonnegativity of $\cf '$, 
it holds that for any $0<t\le 1$, 
\begin{align}\label{;lipsu}
 \left| 
 \su (t,x_{1})-\su (t,x_{2})
 \right| 
 \le \cf (K)^{1/t}|x_{1}-x_{2}| 
 \quad \text{for all }x_{1},x_{2}\in [\ve ,K]. 
\end{align} 
Therefore we have the convergence 
\begin{align*}
 &\bigl| 
 \ex \left[ 
 \su (t,\ex [f_{n}(\br _{1})|\F _{t}])
 \right] 
 -\ex \left[ 
 \su (t,\ex [f_{}(\br _{1})|\F _{t}])
 \right] 
 \bigr| \\
 &\le 
 \ex \left[ \bigl| 
 \su (t,\ex [f_{n}(\br _{1})|\F _{t}])
 -\su (t,\ex [f_{}(\br _{1})|\F _{t}])
 \bigr| 
 \right] \\
 &\le \cf (K)^{1/t}\ex \left[ 
 \left| 
 f_{n}(\br _{1})-f(\br _{1})
 \right| 
 \right] \\
 &\xrightarrow[n\to \infty ]{}0
\end{align*}
by \eqref{;l1conv}, which is true for any $0<t\le 1$. 
Here for the third line, we used \eqref{;lipsu} as well as 
the conditional Jensen inequality when $t<1$. Letting 
$n\to \infty $ on both sides of \eqref{;pstrongn}, we have 
\begin{align}\label{;pstrongn2}
 \sv \left( 
 t,\ex \left[ 
 \su (t,\ex [f_{}(\br _{1})|\F _{t}])
 \right] 
 \right) \le 
 \sv \left( 1,\ex \left[ 
 \su (1,f_{}(\br _{1}))
 \right] \right) 
\end{align}
for $f$ satisfying \eqref{;bdf} for some $\ve $ and $K$. 

For a general nonnegative and measurable $f$ satisfying \eqref{;intf}, 
we set 
\begin{align*}
 f_{m,n}:=\min \left\{ 
 \max \left\{ 
 f,1/m
 \right\} ,n
 \right\} 
\end{align*}
for positive integers $m,n$. Then we have \eqref{;pstrongn2} for these 
$f_{m,n}$'s. Appealing to the (conditional) monotone convergence 
theorem, we first let $m\to \infty $ and then $n\to \infty $ to conclude 
the proof. 
\end{proof}

As noted in \sref{;intro}, two choices $x^{p-1}$ ($p>1$) and $e^{x}$ 
for $\cf (x)$ both fulfill \eqref{;cond1}. 
%%Therefore \tref{;tunif} applies to these two functions. 
In the remark below, we explain how the hypercontractivity \eqref{;hc} 
and its variant \eqref{;ehc} are recovered from \tref{;tunif} 
applied to these $\cf $'s and reveal a specific feature of the two 
functions. 

\begin{rem}\label{;runif}
\thetag{1} For $\cf (x)=x^{p-1}$, we have 
\begin{align*}
 \uf (t,x)=\frac{1}{q(t)}x^{q(t)} \quad \text{and} \quad 
 \vf (t,x)=\left\{ q(t)x\right\} ^{1/q(t)}, 
 \quad t\ge 0,\ x>0, 
\end{align*}
with $q(t)=e^{2t}(p-1)+1$, and hence \tref{;tunif} entails that  
\eqref{;hc} holds for every nonnegative $f\in \lp{p}{\gss{\D }}$. 
If $f\in \lp{p}{\gss{\D }}$ is not necessarily nonnegative, 
then noting the fact that $|\ou _{t}f|\le \ou _{t}|f|$ 
$\gss{\D }$-a.e.\ for every $t\ge 0$ (or equivalently, 
$|M_{t}(f)|\le M_{t}(|f|)$ a.s.\ for every $0<t\le 1$ in the formulation 
of the present section), we obtain \eqref{;hc} for any $f\in \lp{p}{\gss{\D }}$. 
As to the choice $\cf (x)=e^{x}$, the corresponding $\uf $ and $\vf $ 
are given respectively by 
\begin{align*}
 \uf (t,x)=e^{-2t}\left\{ 
 \exp (e^{2t}x)-1
 \right\} \quad \text{and} \quad 
 \vf (t,x)=e^{-2t}\log \left( e^{2t}x+1\right) 
\end{align*}
for $t\ge 0$ and $x>0$. Thus for a nonnegative, measurable 
$f$ such that $e^{f}\in \lp{1}{\gss{\D }}$, 
\eqref{;genhc} is restated as 
\begin{align*}
 e^{-2t}\log \norm{\exp \left( e^{2t}\ou _{t}f\right) }{1}
 \le \log \| e^{f}\| _{1}
 \quad \text{for all }t\ge 0, 
\end{align*}
which is nothing but \eqref{;ehc}. For a general $f$ 
satisfying $f\in \lp{1}{\gss{\D }}$ and $e^{f}\in \lp{1}{\gss{\D }}$, set 
$f_{n}=\max \{ f,-n\} $ for each positive integer $n$. Then 
\tref{;tunif} applies to $f_{n}+n$, yielding \eqref{;ehc} with 
$f_{n}$ replacing $f$. Appealing to the monotone convergence theorem, 
we let $n\to \infty $ on both sides and conclude that \eqref{;ehc} 
holds true for any $f\in \lp{1}{\gss{\D }}$ satisfying $e^{f}\in \lp{1}{\gss{\D }}$. 

\noindent 
\thetag{2} In both of the above two cases of $\cf $, 
%%that $\cf (x)=x^{p-1}$ and that $\cf (x)=e^{x}$, 
the corresponding $\vp $ defined by \eqref{;defphi} 
is a linear function in $x$, which fact may be deduced from the 
expressions \eqref{;deriphip} and \eqref{;deriphim}. Therefore in those 
cases, the right-hand side of \eqref{;condexp1} and that of 
\eqref{;condexp2} coincide. In addition, the inequality \eqref{;eqlbd} 
may also be seen by applying the conditional Schwarz inequality to 
$\left| \ex [D_{s}N_{t}|\F _{s}]\right| ^{2}$. We note that 
the Clark-Ocone formula and the conditional Schwarz inequality are both 
main ingredients in a simple derivation \cite{chl} of the logarithmic 
Sobolev inequality over Wiener space. We also remark that 
if $\cf (x)$ satisfies $(\cf /\cf ')''=0$, then it is identical, up to 
affine transformation for $x$, with either $x^\al $ for some 
$\al \neq 0$ or $e^{x}$. 
\end{rem}

The next remark is on the proof of \pref{;punif}. 
\begin{rem}\label{;rpunif}
 \thetag{1} Let $f$ be in $\C{\R ^{\D }}$. 
 In view of the identity \eqref{;condexp1}, if we set 
 a nonnegative function 
 $\Phi \equiv \Phi _{\cf ,f}$ on $(0,1]$ by 
 \begin{align*}
  \Phi (t)=\int _{0}^{1}
  \ex \left[ 
  \vp (t,N_{t})\left| 
  \frac{D_{s}N_{t}}{\vp (t,N_{t})}
  -\frac{\ex [D_{s}N_{t}|\F _{s}]}{\vp (t,\ex [N_{t}|\F _{s}])}
  \right| ^{2}
  \right] ds, 
 \end{align*}
 then what we have in fact shown in the proof is that  
 \begin{align*}
  2\su _{x}\!\left( t,\sv (t,\ex [N_{t}])\right) \frac{d}{dt}
  \sv (t,\ex [N_{t}])\ge \Phi (t)
 \end{align*}
 for any $0<t\le 1$. Here equality holds if 
 $\vp $ is linear in the spatial variable, which is the case 
 when $\cf (x)$ is $x^{p-1}$ for some $p>1$, as well as when 
 $\cf (x)$ is $e^{x}$ as noted in \rref{;runif}\,\thetag{2}. 
 In the former case, by dividing both sides of the equality by 
 the quantity 
 \begin{align*}
  \sv (t,\ex [N_{t}])\su _{x}\!\left( t,\sv (t,\ex [N_{t}])\right) 
  =\ex \bigl[ 
  \left\{ 
  M_{t}(f)
  \right\} ^{(p-1)/t+1}
  \bigr] , 
 \end{align*}
 the following identity is easily deduced on account of 
 \eqref{;idenlaw}: 
 \begin{align*}
  \norm{\ou _{t}f}{q(t)}=\norm{f}{p}
  \exp \left\{ 
  -\int _{0}^{t}\frac{e^{-2\tau }}
  {\norm{\ou _{\tau }f}{q(\tau )}^{q(\tau )}}
  \Phi (e^{-2\tau })\,d\tau 
  \right\} & && \text{for all }t\ge 0; 
\intertext{in the latter case 
$\cf (x)=e^{x}$, a similar identity also holds: }
  \norm{\exp \left( \ou _{t}f\right) }{e^{2t}}
  =\| e^{f}\| _{1}
  \exp \left\{ 
  -\int _{0}^{t}\frac{e^{-2\tau }}
  {\norm{\exp \left( \ou _{\tau }f\right) }{e^{2\tau }}^{e^{2\tau }}}
  \Phi (e^{-2\tau })\,d\tau 
  \right\} & && \text{for all }t\ge 0. 
 \end{align*}
 
 \noindent 
 \thetag{2} Let $\su (t,x), 0<t\le 1,\,x>0$, be a generic, 
 positive and 
 smooth function with $\su _{x}>0$. The derivation of \pref{;punif} 
 hinges upon the fact that we are able to solve the following 
 pair of equations in $(0,1)\times (0,\infty )$: 
 \begin{align*}
 \begin{cases}
  \psi (t,\su (t,x))\U (t,x)=-\dfrac{1}{t}, \\
  \psi (t,\su (t,x))\dfrac{\su _{xx}(t,x)}{\{ \su _{x}(t,x)\} ^{2}}
  =1, 
 \end{cases}
 \end{align*}
 where $\U $ is defined by \eqref{;defU} and $\psi $ is also an unknown, 
 positive function such that $\psi (t,\cdot )$ is required to be concave 
 for every $0<t\le 1$. Indeed, by these equations, $\su $ must satisfy 
 \begin{align*}
  t\U (t,x)+\frac{\su _{xx}(t,x)}{\{ \su _{x}(t,x)\} ^{2}}=0 
  \quad \text{in }(0,1)\times (0,\infty ), 
 \end{align*}
 which equation is rephrased as 
 \begin{align*}
  \frac{\partial ^{2}}{\partial x\partial t}
  \left\{ 
  t\log \su _{x}(t,x)
  \right\} =0. 
 \end{align*}
 Then $\su _{x}$ is expressed, up to multiple of a positive 
 function in $t$, as 
 \begin{align*}
  %%\su _{x}(t,x)=\exp \left( \frac{C(x)}{t}\right) 
  \su _{x}(t,x)=e^{C(x)/t}
 \end{align*}
 with $C$ a differentiable function in $x$; the associated $\psi $ 
 is given by the product of a positive function in $t$ and 
 $(1/C'(\sv (t,x))\exp \{ C(\sv (t,x))/t\} $, 
 which is found to be concave in $x$ when $C'>0$ and $1/C'$ is concave. 
 Here $\sv $ is the inverse function of $\su $ in the spatial variable 
 as in the notation of \sref{;spre}. 
\end{rem} 

We give examples of functions $\cf $ satisfying \eqref{;cond1} 
and show consequences of \tref{;tunif} corresponding to 
them.  
\begin{exm}\label{;cexample}
\thetag{1} For two exponents $\al ,\beta $ satisfying 
$\al +\beta \ge 1$ and $0<\beta \le 1$, we take 
\begin{align*}
 \cf (x)=x^{\al +\beta -1}\exp \left( x^{\beta }\right) , 
 \quad x>0. 
\end{align*}
If we write $\rho =\al +\beta -1$, then 
\begin{align*}
 \cf '(x)=\left( \rho x^{\rho -1}
 +\beta x^{\rho +\beta -1}\right)
 \exp \left( x^{\beta }\right) ,  
\end{align*}
which is positive for all $x>0$ when $\rho \ge 0$ and 
$\beta >0$. Noting 
\begin{align*}
 \frac{\cf }{\cf '}(x)=
 \frac{x}{\rho +\beta x^{\beta }}, 
\end{align*}
we find that 
\begin{align*}
 \left\{ 
 \frac{\cf }{\cf '}(x)
 \right\} '
 =\frac{\rho +(1-\beta )y}{(\rho +y)^{2}}
 \bigg| _{y=\beta x^{\beta }}. 
\end{align*}
The function $\{ \rho +(1-\beta )y\} /(\rho +y)^{2}$ in $y>0$ 
is nonincreasing when $\beta \le 1$ and $\rho (1+\beta )\ge 0$, 
and hence under the condition imposed on $\al $ and $\beta $, 
the above $\cf $ satisfies \eqref{;cond1}. Observe that by 
L'H\^opital's rule, the corresponding $\uf $ admits the asymptotics 
\begin{align*}
 \uf (t,x)
 \sim \frac{e^{-2t}}{\beta }
 x^{e^{2t}(\al +\beta -1)-\beta +1}
 \exp \left( e^{2t}x^{\beta }\right) 
\end{align*}
as $x\to \infty $ for every $t\ge 0$. Here and 
below, the notation $\sim $ indicates that 
the ratio of both sides in the equation converges to 
$1$ when $x\to \infty $. As a consequence, 
we deduce from \tref{;tunif} that the following implication 
is true: for any nonnegative, measurable function $f$ 
on $\R ^{\D }$, 
\begin{align*}
 f^{\al }\exp \left( f^{\beta }\right) \in \lp{1}{\gss{\D }} 
 \ \Rightarrow \ 
 (\ou _{t}f)^{e^{2t}(\al +\beta -1)-\beta +1}
 \exp \left\{ 
 e^{2t}(\ou _{t}f)^{\beta }
 \right\} \in \lp{1}{\gss{\D }}, 
 \ \forall t\ge 0. 
\end{align*}

\noindent 
\thetag{2} For positive reals $\al $ and $\beta $, take 
\begin{align*}
 \cf (x)=\frac{(x+a)^{\al }}{\log ^{\beta }(x+a)}, \quad x>0, 
\end{align*}
where $a$ is a constant satisfying $a\ge e^{2+\rho }$ with 
$\rho :=\beta /\al $. 
Then 
\begin{align*}
 \cf '(x)=\frac{\al (x+a)^{\al -1}}{\log ^{\beta +1}(x+a)}
 \left\{ 
 \log (x+a)-\rho 
 \right\} >0 \quad \text{for all }x>0, 
\end{align*}
and 
\begin{align*}
 \frac{\cf }{\cf'}(x)
 =\frac{x+a}{\al }+\frac{\rho }{\al }\cdot 
 \frac{x+a}{\log (x+a)-\rho }
\end{align*}
is concave on $(0,\infty )$. Indeed, 
\begin{align*}
 \left\{ \frac{x+a}{\log (x+a)-\rho }\right\} '
 =\frac{y-1}{y^{2}}\bigg| _{y=\log (x+a)-\rho }, \quad x>0, 
\end{align*}
the function $(y-1)/y^{2}$ being decreasing on $(2,\infty )$. 
Therefore \tref{;tunif} applies to the above 
choice of $\cf $ as well. Since the corresponding $\uf $ 
admits the asymptotics 
\begin{align*}
 \uf (t,x)\sim 
 \frac{1}{e^{2t}\al +1}\cdot 
 \frac{x^{e^{2t}\al +1}}{\log ^{e^{2t}\beta }x} 
 \quad \text{as }x\to \infty 
\end{align*}
for every $t\ge 0$, there holds the following implication: 
for any nonnegative, measurable function $f$ on $\R ^{\D }$, 
\begin{align*}
 \frac{f^{\al +1}}{\log ^{\beta }(f+b)}\in \lp{1}{\gss{\D }}\ 
 \Rightarrow \ 
 \frac{(\ou _{t}f)^{e^{2t}\al +1}}
 {\log ^{e^{2t}\beta }(\ou _{t}f+b)}\in \lp{1}{\gss{\D }}, \ \forall t\ge 0. 
\end{align*}
Here $b$ is any constant greater than $1$. 
\end{exm}

We end this section by providing a generalization of the logarithmic Sobolev inequality \eqref{;lsi} as a corollary to  \tref{;tunif}. 

\begin{cor}\label{;cglsi}
 For a function $\cf :(0,\infty )\to (0,\infty )$ satisfying the 
 assumptions in \tref{;tunif}, set 
 \begin{align*}
  \G (x)=\int _{0}^{x}\cf (y)\,dy && \text{and} &&  
  \h (x)=\int _{0}^{x}\cf (y)\log \cf (y)\,dy 
  %%\text{for }x>0. 
 \end{align*}
 for $x>0$. Then for any $f\in \C{\R ^{\D }}$, 
 we have 
 \begin{align}\label{;glsi}
  \int _{\R ^{\D }}\h (f)\,d\gss{\D }
  \le \frac{1}{2}\int _{\R ^{\D }}\cf '(f)\left| \nabla f\right| ^{2}d\gss{\D }
  +H\circ G^{-1}\left( \norm{\G (f)}{1}\right) . 
 \end{align}
 Here $\G ^{-1}$ is the inverse function of $\G $. 
\end{cor}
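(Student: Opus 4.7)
The plan is to differentiate the inequality \eqref{;pgenhc} of \pref{;punif} at the right endpoint $t=1$. Since \eqref{;pgenhc} asserts $\sv(t,\ex[N_t]) \le \sv(1,\ex[N_1])$ for all $0<t\le 1$ with equality at $t=1$ (where $N_t=\su(t,M_t(f))$ and $\su$ is as in \eqref{;defsu}), the left derivative of $t\mapsto \sv(t,\ex[N_t])$ at $t=1$ must be non-negative. But \lref{;lderisv} gives this derivative as an explicit identity for every $0<t\le 1$, so evaluating that identity at $t=1$ converts the derivative inequality into an honest inequality whose content will turn out to be precisely \eqref{;glsi}.

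To carry this out, I pick $f\in\C{\R^\D}$, set $\xi:=\sv(1,\ex[N_1])$, and apply \lref{;lderisv} at $t=1$ to get
\begin{align*}
 \su_x(1,\xi)\,\frac{d}{dt}\sv(t,\ex[N_t])\bigg|_{t=1}
 =-\su_t(1,\xi)+\ex\!\left[\su_t(1,M_1(f))\right]
 +\frac{1}{2}\ex\!\left[\su_{xx}(1,M_1(f))|\dv_1|^2\right]\ge 0.
\end{align*}
From \eqref{;defsu} I compute the partial derivatives at $t=1$: $\su(1,x)=\G(x)$, $\su_x(1,x)=\cf(x)$, $\su_{xx}(1,x)=\cf'(x)$, and differentiating under the integral sign yields $\su_t(1,x)=-\int_{0}^{x}\cf(y)\log\cf(y)\,dy=-\h(x)$. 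Also $\sv(1,\cdot)=\G^{-1}$, so by \eqref{;idenlaw} at $t=0$,
\begin{align*}
 \xi=\G^{-1}(\ex[\G(f(\br_1))])=\G^{-1}(\norm{\G(f)}{1}).
\end{align*}
Since $M_1(f)=f(\br_1)$ and $\dv_1=\nabla f(\br_1)$, substituting these into the displayed inequality and dividing by the strictly positive factor $\cf(\xi)=\su_x(1,\xi)$ transforms it into
\begin{align*}
 \h(\xi)-\int_{\R^\D}\h(f)\,d\gss{\D}+\frac{1}{2}\int_{\R^\D}\cf'(f)|\nabla f|^2\,d\gss{\D}\ge 0,
\end{align*}
which is a rearrangement of \eqref{;glsi}.

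The main technical concern is to confirm that \lref{;lderisv} genuinely applies at the endpoint $t=1$. Since $f\in\C{\R^\D}$ is bounded between some constants $0<\ve\le K<\infty$, the Jensen-type estimate gives $M_t(f)\in[\ve,K]$ $\pr$-a.s.\ and $\dv_t$ is bounded; as $\cf,\cf'$ are continuous on $[\ve,K]$, the expectations appearing in \lref{;lderisv} are finite and the right-hand side of that identity is continuous in $t$ up to $t=1$. No density argument is needed because the corollary is stated only for $f\in\C{\R^\D}$. The computations above, together with the monotonicity established in the proof of \pref{;punif}, then close the argument.
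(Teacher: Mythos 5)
Your proof is correct and follows the paper's argument exactly: both differentiate \eqref{;pgenhc} at $t=1$, apply \lref{;lderisv}, and translate $\su(1,\cdot)=\G$, $\su_x(1,\cdot)=\cf$, $\su_{xx}(1,\cdot)=\cf'$, $\su_t(1,\cdot)=-\h$, $\sv(1,\cdot)=\G^{-1}$ to obtain \eqref{;glsi} via \eqref{;idenlaw}. You even computed the sign of $\su_t(1,\cdot)$ correctly, whereas the paper's prose contains a harmless sign typo (it writes $\su_t(1,\cdot)=\h$ where the identity you derived, $\su_t(1,\cdot)=-\h$, is the correct one).
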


\begin{rem}\label{;nolabel}
 It is plausible that the inequality \eqref{;glsi} would be extended to 
 the class of functions $f$ for which every term in the inequality 
 makes sense, however, we do not pursue it here. 
\end{rem}

The proof of \cref{;cglsi} proceeds along the same lines as in the 
proof of \lref{;lsufficiency}. 
\begin{proof}[Proof of \cref{;cglsi}]
 For the proof, we use \pref{;punif}, the equivalent statement of 
 \tref{;tunif}. We see from \eqref{;pgenhc} that 
 \begin{align*}
  \frac{d}{dt}\sv \left( t,\ex \left[ \su (t,M_{t}(f))\right] \right) 
  \bigg| _{t=1}\ge 0, 
 \end{align*}
 which is rewritten, by \lref{;lderisv}, as 
 \begin{align*}
  -\su _{t}(1,\sv (1,\ex [N_{1}]))
 +\ex \left[ 
 \su _{t}(1,M_{1}(f)) 
 \right] +\frac{1}{2}
 \ex \left[ 
 \su _{xx}(1,M_{1}(f))\left| \nabla f(\br _{1})\right| ^{2}
 \right] \ge 0
 \end{align*}
 due to the definition \eqref{;defv} of $\dv $ and the positivity of 
 $\su _{x}$. The last inequality is nothing but 
 \eqref{;glsi} because of the relations 
 $\su _{t}(1,\cdot )=\h $, $\su (1,\cdot )=\G $ and 
 $\sv (1,\cdot )=\G ^{-1}$ by the definitions of 
 $\su $, $\h $ and $\G $, as well as because of the identities 
 $M_{1}(f)=f(\br _{1})$ and $N_{1}=\su (1,f(\br _{1}))=\G (f(\br _{1}))$. 
\end{proof}

If we choose $x^{p-1}$ ($p>1$) or $e^{x}$ for $\cf (x)$ in \eqref{;glsi}, 
then \eqref{;lsi} is recovered; details are left to the reader. 

%%%%%% New Section %%%%%%
\section{On the reverse hypercontractivity}\label{;srhc}
This section concerns the reverse hypercontractivity of the 
Ornstein-Uhlenbeck semigroup $\ou $. We begin with the following 
proposition, which is proven by modifying slightly the proof of 
\tref{;punif}. 

\begin{prop}\label{;pgenrhc}
 Let a positive function $\cf $ on $(0,\infty )$ be in 
 $C^{1}((0,\infty ))$ and satisfy 
 \begin{align}\label{;cond2}\tag{C$'$}
  \text{$\cf '<0$, 
  $\dfrac{\cf}{\cf '}$ is convex on $(0,\infty )$ and 
  $\lim \limits _{x\to 0+}\cf (x)<\infty $.}
 \end{align}
 We set the function $\uf (t,x),\,t\ge 0,x>0$, by \eqref{;defuf}: 
 \begin{align*}
  \uf (t,x)=\int _{0}^{x}\cf (y)^{e^{2t}}\,dy. 
  %%, \quad t\ge 0,\ x>0. 
 \end{align*}
 Then for any $f\in \C{\R ^{\D }}$, 
 we have 
 \begin{align}\label{;genrhc}
  \vf \left( t,\norm{\uf (t,\ou _{t}f)}{1}\right) 
  \ge \vf \left( 0,\norm{\uf (0,f)}{1}\right) \quad \text{for all }t\ge 0. 
 \end{align}
 Here for every $t\ge 0$, the function 
 $\vf (t,\cdot )$ is the inverse function of 
 $\uf (t,x),\,x>0$. 
\end{prop}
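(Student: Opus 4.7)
The plan is to mirror the proof of \pref{;punif} line by line, reversing the direction of every inequality, the changes being forced by the hypothesis \eqref{;cond2}. Set $\su (t,x)=\int _{0}^{x}\cf (y)^{1/t}\,dy$ for $0<t\le 1$ and $x>0$, and put $N_{t}=\su (t,M_{t}(f))$ with $M_{t}(f)=\ex [f(\br _{1})|\F _{t}]$; by \eqref{;idenlaw} the assertion \eqref{;genrhc} is equivalent to
\begin{align*}
 \sv (t,\ex [N_{t}])\ge \sv (1,\ex [N_{1}]) \quad \text{for all }0<t\le 1,
\end{align*}
which I will obtain from $(d/dt)\sv (t,\ex [N_{t}])\le 0$ on $(0,1]$. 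For $f\in \C{\R ^{\D }}$ (taking $l=0$, $r=\infty $), the lemmas of \sref{;spre} apply; the extra assumption $\lim _{x\to 0+}\cf (x)<\infty $ ensures that $\su (t,\cdot )$ and the auxiliary function $\vp $ of \eqref{;defphi} are finite-valued on $(0,\infty )$.

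The first structural change is the analog of \lref{;lconcave}. The representation \eqref{;rewriphi} and the one-sided derivative formulas \eqref{;deriphip}--\eqref{;deriphim} survive unchanged; however, $\vp (t,\cdot )$ is now \emph{negative} (since $\cf /\cf '<0$) and \emph{convex} (the right-hand sides of \eqref{;deriphip}--\eqref{;deriphim} are nondecreasing in $x$ thanks to the convexity of $\cf /\cf '$). The more delicate change is the analog of \lref{;llbd}. The algebraic identity \eqref{;condexp1} continues to hold, but its left-hand side is now $\le 0$ because $\vp \le 0$; meanwhile, the convexity of $\vp (t,\cdot )$ together with the conditional Jensen inequality gives $\ex [\vp (t,N_{t})|\F _{s}]\ge \vp (t,\ex [N_{t}|\F _{s}])$, both sides being negative. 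Multiplying this by the nonnegative factor $|\ex [D_{s}N_{t}|\F _{s}]|^{2}/\{ \vp (t,\ex [N_{t}|\F _{s}])\} ^{2}$ yields a \emph{lower} bound on the third term of the right-hand side of \eqref{;condexp1}; rearranging and taking expectations produces the reverse of \eqref{;eqlbd}, namely
\begin{align*}
 \ex \left[
 \U \!\left( t,\sv (t,\ex [N_{t}|\F _{s}])\right)
 \left| \ex [D_{s}N_{t}|\F _{s}]\right| ^{2}
 \right]
 \le -\ex \left[
 \frac{|D_{s}N_{t}|^{2}}{\vp (t,N_{t})}
 \right] .
\end{align*}

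Plugging this bound into \lref{;ldiff} and applying the chain-rule identity $D_{s}N_{t}=\ind _{[0,t]}(s)\su _{x}(t,M_{t})\dv _{t}$ exactly as in the proof of \pref{;punif} gives the reverse of \eqref{;bddiff}. The crucial algebraic identity
\begin{align*}
 -t\,\frac{\{ \su _{x}(t,x)\} ^{2}}{\vp (t,\su (t,x))}+\su _{xx}(t,x)=0
\end{align*}
follows only from the defining relations among $\su $, $\vp $ and $\cf $ and is insensitive to the sign of $\cf '$, so the right-hand side of the reversed \eqref{;bddiff} vanishes, and since $\su _{x}>0$ we conclude $(d/dt)\sv (t,\ex [N_{t}])\le 0$ on $(0,1]$; integrating and translating back via \eqref{;idenlaw} yields \eqref{;genrhc}. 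Because the proposition is stated only for $f\in \C{\R ^{\D }}$, no density argument is required. The main obstacle is the sign bookkeeping in the analog of \lref{;llbd}: the simultaneous switch in the sign of $\vp $ and in the direction of Jensen's inequality must conspire to flip \eqref{;eqlbd} as a whole rather than reinforce the original direction.
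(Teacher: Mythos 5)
Your proposal is correct and follows essentially the same route as the paper's own proof: establish that $\vp(t,\cdot)$ is negative and convex, reverse the conditional Jensen step in the analog of Lemma~\ref{;llbd} to flip the sign of \eqref{;eqlbd}, feed this into Lemma~\ref{;ldiff}, and invoke the same cancellation identity to obtain $(d/dt)\sv(t,\ex[N_t])\le 0$. The sign bookkeeping you flag as the crux is handled exactly as in the paper, and you are right that no density argument is needed since the statement is restricted to $f\in\C{\R^{\D}}$.
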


In \eqref{;cond2}, the last condition is put to ensure the finiteness of 
$\uf $. By the identity \eqref{;idenlaw} in law, the assertion of the 
proposition  is restated as 
 \begin{align}\label{;egenrhc}
  \sv \left( t,\ex [\su (t,M_{t}(f))] \right) 
  \ge \sv \left( 1,\ex [\su (1,M_{1}(f))]\right) 
  \quad \text{for all }0<t\le 1, 
 \end{align}
 for each $f\in \C{\R ^{\D }}$. 
 Here $\su $ is defined by \eqref{;defsu} with $\cf $ satisfying 
 \eqref{;cond2} and $\sv $ denotes the inverse function 
 of $\su $ in the spatial variable as before. 
 
\begin{proof}[Proof of \pref{;pgenrhc}]
 Observe that with taking 
 $l=0$ and $r=\infty $, the lemmas in \sref{;spre} apply to the above 
 choice of $f$ and $\su $, and hence \lref{;ldiff} is valid. We shall see that 
 the right-hand side of \eqref{;eqdiff} in that lemma 
 does not exceed $0$, which entails 
 \begin{align}\label{;derineg}
  \frac{d}{dt}\sv (t,\ex [N_{t}])\le 0 \quad \text{for any }0<t\le 1, 
 \end{align}
 by the positivity of $\su _{x}$. To this end, recall the definition 
 \eqref{;defphi} of $\vp $; its expression \eqref{;rewriphi} in terms of 
 $\cf $ is valid in the present case as well, and in particular 
 it reveals, by \eqref{;cond2} and by repeating the same argument 
 as in the proof of \lref{;lconcave}, that $\vp $ is negative and is 
 a convex function in $x$ for every $0<t\le 1$. Remembering 
 the identity 
 \eqref{;condexp1} in the proof of \lref{;llbd} and noting that 
 \begin{align*}
  \ex \left[ 
  \vp (t,N_{t})|\F _{s}
  \right] \ge \vp \left( t,\ex [N_{t}|\F _{s}]\right) \quad \text{a.s. }
 \end{align*}
 by the conditional Jensen inequality, we obtain 
 \begin{align*}
  0\ge \ex \left[ 
  \frac{|D_{s}N_{t}|^{2}}{\vp (t,N_{t})}\bigg| \F _{s}
  \right] -\frac{\left| \ex [D_{s}N_{t}|\F _{s}]\right| ^{2}}
  {\vp (t,\ex [N_{t}|\F _{s}])} \quad \text{a.s. }
 \end{align*}
 in place of \eqref{;condexp2}, due to the negativity of $\vp $. 
 By the definition of $\vp $, the last inequality leads to \eqref{;eqlbd} 
 with the reversed inequality sign. The rest of the proof for 
 \eqref{;derineg} proceeds along the same lines as in the first part of 
 the proof of \pref{;punif}. Therefore \eqref{;egenrhc} follows and 
 we obtain the proposition. 
\end{proof}

The next proposition shows that \eqref{;genrhc} unifies two properties of 
$\ou $, the reverse hypercontractivity \eqref{;rhc} and the 
exponential variant \eqref{;ehc} of the hypercontractivity. 

\begin{prop}\label{;punif2}
 The property \eqref{;genrhc} implies \eqref{;rhc} and 
 \eqref{;ehc}. 
\end{prop}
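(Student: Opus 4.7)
The strategy is to apply \pref{;pgenrhc} for two specific choices of $\cf $, in the same manner as \rref{;runif}\thetag{1} recovered \eqref{;hc} and \eqref{;ehc} from \tref{;tunif}.

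For the derivation of \eqref{;ehc}, the choice is $\cf (x)=e^{-x}$, which trivially satisfies \eqref{;cond2}: $\cf '<0$, $\cf /\cf '\equiv -1$ is convex, and $\cf (0+)=1<\infty $. A direct computation gives
\begin{align*}
 \uf (t,x)=\frac{1-e^{-e^{2t}x}}{e^{2t}},\qquad \vf (t,y)=-\frac{1}{e^{2t}}\log \bigl( 1-e^{2t}y\bigr) ,
\end{align*}
so that \eqref{;genrhc}, after simplification, reads $\norm{\exp (-\ou _{t}f)}{e^{2t}}\le \norm{e^{-f}}{1}$ for every $f\in \C{\R ^{\D }}$. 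By the linearity of $\ou _{t}$, this is exactly \eqref{;ehc} applied to $g:=-f$, where $g$ ranges over the set of $C_{b}^{1}(\R ^{\D })$-functions that are bounded above by some negative constant. To reach an arbitrary $g\in \lp{1}{\gss{\D }}$ with $e^{g}\in \lp{1}{\gss{\D }}$, I would apply the derived inequality to shifted truncations $\tilde{f}:=C-g_{m,n}$ with $g_{m,n}:=(g\wedge n)\vee (-m)$ and $C>n$ (the constant $C$ cancels on both sides by linearity of $\ou _{t}$), after first approximating $g_{m,n}$ by $C_{b}^{1}$-functions via the same truncation-and-mollification procedure used at the end of the proof of \pref{;punif}, and then letting $m\to \infty $ and $n\to \infty $ via the dominated and monotone convergence theorems.

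For the derivation of \eqref{;rhc}, I would fix $\al >0$ and, for each $\ve >0$, take $\cf (x)=(x+\ve )^{-(\al +1)}$. Then \eqref{;cond2} is verified: $\cf '<0$, $(\cf /\cf ')(x)=-(x+\ve )/(\al +1)$ is linear and hence convex, and $\cf (0+)=\ve ^{-(\al +1)}<\infty $. Setting $q(t):=e^{2t}(\al +1)-1>0$, one finds
\begin{align*}
 \uf (t,x)=\frac{\ve ^{-q(t)}-(x+\ve )^{-q(t)}}{q(t)},\qquad \vf (t,y)=\bigl( \ve ^{-q(t)}-q(t)y\bigr) ^{-1/q(t)}-\ve ,
\end{align*}
and \eqref{;genrhc} transforms, after an algebraic simplification, into $\norm{1/(\ou _{t}f+\ve )}{q(t)}\le \norm{1/(f+\ve )}{\al }$, valid for every $f\in \C{\R ^{\D }}$. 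Letting $\ve \to 0+$ and invoking monotone convergence (since for any positive $g$ and $q>0$ one has $(g+\ve )^{-q}\uparrow g^{-q}$ as $\ve \downarrow 0$) yields \eqref{;rhc} for $f\in \C{\R ^{\D }}$; a density argument of exactly the kind carried out at the end of the proof of \pref{;punif} then extends the inequality to any $\gss{\D }$-a.e.\ positive $f\in \lp{1}{\gss{\D }}$ with $1/f\in \lp{\al }{\gss{\D }}$.

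The main technical obstacle lies in the density steps needed to pass from $\C{\R ^{\D }}$ to the full admissible classes in \eqref{;ehc} and especially \eqref{;rhc}, where one must approximate while simultaneously preserving positivity and the integrability of $1/f$. These should however follow by exactly the same recipe (truncation between two positive levels, mollification to reach $C_{b}^{1}$, removal of the truncation by monotone convergence) that is successfully executed at the end of the proof of \pref{;punif}.
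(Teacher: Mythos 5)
Your proposal is correct and follows essentially the same approach as the paper: apply \pref{;pgenrhc} with $\cf(x)=(x+\text{const})^{-(\al+1)}$ to obtain \eqref{;rhc} and with $\cf(x)=e^{-x}$ (up to a shift) to obtain \eqref{;ehc}, then extend from $\C{\R^{\D}}$ by approximation. The only cosmetic difference is in how the shift constant is handled: the paper applies \eqref{;genrhc} to the shifted function $f-\kp$ (resp.\ $-f+\kp$) with $\kp$ chosen so that the shift cancels exactly in the resulting inequality, avoiding any limit in the shift parameter, whereas you apply it to $f$ directly and then let $\ve\downarrow 0$ (resp.\ peel off the additive constant $C$ afterwards); both routes are valid and the density arguments are the ones the paper uses.
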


In view of \eqref{;idenlaw}, in order to prove the assertion, 
it suffices to show that we may derive from \eqref{;egenrhc} 
the following: given $\al >0$, 
 \begin{align}\label{;pstrong}
  \ex \left[ 
  \left( 
  \frac{1}{M_{t}(f)}
  \right) ^{\rho _{\al }(t)}
  \right] ^{1/\rho _{\al }(t)}
  \le \ex \left[ 
  \left( \frac{1}{M_{1}(f)}\right) ^{\al }
  \right] ^{1/\al } \quad \text{for all }0<t\le 1, 
 \end{align}
 for every a.e.\ nonnegative $f\in \lp{1}{\gss{\D }}$ 
 satisfying $1/f\in \lp{\al }{\gss{\D }}$, as well as  
 \begin{align}\label{;exp0}
 \ex \left[ 
 \exp \left\{ 
 (1/t)M_{t}(f)
 \right\} 
 \right] ^t\le \ex \left[ 
 \exp \left\{ 
 M_{1}(f)
 \right\} 
 \right] \quad \text{for all }0<t\le 1, 
\end{align}
for every $f\in \lp{1}{\gss{\D }}$ satisfying $e^{f}\in \lp{1}{\gss{\D }}$. 
Here we set $\rho _{\al }(t)=(\al +1)/t-1$ in \eqref{;pstrong}.  

\begin{proof}[Proof of \pref{;punif2}]
 We begin with the proof of \eqref{;pstrong}. 
 For this purpose, let $f$ be in $\C{\R ^{\D }}$ first. We pick a 
 constant $\kp >0$ in such 
 a way that $\kp <\inf \limits_{x\in \R ^{\D }}f(x)$ and take 
 \begin{align*}
  \cf (x)=\frac{1}{(x+\kp )^{\al +1}},\quad x>0. 
 \end{align*}
 Then the condition \eqref{;cond2} is fulfilled; in fact, 
 $(\cf /\cf ')(x)=-(x+\kp )/(\al +1)$, and hence $(\cf /\cf ')''=0$. 
 Therefore we have \eqref{;egenrhc} with this choice of $\cf $. The 
 corresponding $\su $ and $\sv $ are given respectively by 
 \begin{align*}
  \su (t,x)&=\left\{ 
  \kp ^{-\rho _{\al }(t)}-(x+\kp )^{-\rho _{\al }(t)}
  \right\} /\rho _{\al }(t), && 0<t\le 1,\,x>0, \\
  \sv (t,x)&=\left\{ 
  \kp ^{-\rho _{\al }(t)}-\rho _{\al }(t)x
  \right\} ^{-1/\rho _{\al }(t)}-\kp , && 
  0<t\le 1,\,0<x<\kp ^{-\rho _{\al }(t)}/\rho _{\al }(t). 
 \end{align*}
 From these expressions, we see that applying \eqref{;egenrhc} to 
 $f-\kp $ yields \eqref{;pstrong} for every $f\in \C{\R ^{\D }}$. 
 
 Next we assume that $f$ is in $\lp{1}{\gss{\D }}$ and satisfies 
 \begin{align*}
  \ve :=\essinf _{x\in \R ^{\D }}f(x)>0. 
 \end{align*}
 Then we may choose a sequence 
 $\{ f_{n}\} _{n=1}^{\infty }\subset C_{b}^{1}(\R ^{\D })$ in such a 
 way that 
 \begin{align}\label{;l1conv2}
  \lim _{n\to \infty }\ex \left[ \left| 
  f_{n}(\br _{1})-f(\br _{1})
  \right| \right] =0
 \end{align}
 and $\inf \limits_{x\in \R ^{\D }}f_{n}(x)\ge \ve $ for all 
 $n\ge 1$ (see, e.g., the middle part of the proof of 
 \pref{;punif}). We have seen in the previous step that 
 \eqref{;pstrong} holds true for each $f_{n}$:  
 \begin{align}\label{;pstrongn3}
 \ex \left[ 
 \left( 
 \frac{1}{\ex [f_{n}(\br _{1})|\F _{t}]}
 \right) ^{\rho _{\al }(t)}
 \right] ^{1/\rho _{\al }(t)}
 \le \ex \left[ 
 \left( \frac{1}{f_{n}(\br _{1})}\right) ^{\al }
 \right] ^{1/\al }
\end{align}
for all $0<t\le 1$. By the inequality 
$
\left| (1/x_{1})^{\al }-(1/x_{2})^{\al }\right| 
\le \al |x_{1}-x_{2}|/\ve ^{\al +1}
$ for $x_{1},x_{2}\ge \ve $, and by \eqref{;l1conv2}, 
we have the convergence of expectations 
\[ 
 \lim _{n\to \infty }
 \ex \left[ 
 \left( \frac{1}{f_{n}(\br _{1})}\right) ^{\al }
 \right] =
 \ex \left[ 
 \left( \frac{1}{f(\br _{1})}\right) ^{\al }
 \right]  
\] 
as to the right-hand side of \eqref{;pstrongn3}. The same 
reasoning combined with the conditional Jensen inequality 
yields the convergence as $n\to \infty $ of the left-hand 
side of \eqref{;pstrongn3} to the expression 
with $f_{n}$ replaced by $f$.  Hence we have obtained 
\eqref{;pstrong} for $f\in \lp{1}{\gss{\D }}$ with 
$\essinf \limits_{x\in \R ^{\D }}f(x)>0$. 

Finally, let $f$ be an a.e.\ positive function in $\lp{1}{\gss{\D }}$ 
satisfying $1/f\in \lp{\al }{\gss{\D }}$. For every positive integer $n$, set 
\begin{align*}
 f_{n}:=\max \left\{ 
 f,1/n
 \right\} . 
\end{align*}
Then we have \eqref{;pstrongn3} for these $f_{n}$'s as well. 
Since each $f_{n}$ is nonnegative and dominated by 
the integrable function $f+1$, it holds that 
\begin{align*}
 \lim _{n\to \infty }
 \ex [f_{n}(\br _{1})|\F _{t}]=\ex [f(\br _{1})|\F _{t}] 
 \quad \text{a.s.}
\end{align*}
by the conditional dominated convergence theorem. Letting 
$n\to \infty $ on both sides of \eqref{;pstrongn3}, we 
appeal to the monotone convergence theorem to conclude 
the validity of \eqref{;pstrong} for any a.e.\ positive 
$f \in \lp{1}{\gss{\D }}$. 

We turn to the proof of \eqref{;exp0}. 
First we pick $f\in C^{1}_{b}(\R ^{\D })$ 
and let $\kp \ge 0$ be a constant such that 
$\inf \limits _{x\in \R ^{\D }}\left\{ -f(x)\right\} >-\kp $. We may 
take $\cf (x)=e^{-(x-\kp )},\,x>0$, in \pref{;pgenrhc}; indeed, 
$\cf /\cf '$ is identically equal to $-1$ and thus the condition 
\eqref{;cond2} is fulfilled. Then the inequality \eqref{;egenrhc} 
applied to $-f+\kp \in \C{\R ^{\D }}$ entails \eqref{;exp0} for 
every $f\in C^{1}_{b}(\R ^{\D })$, due to the expressions of the 
corresponding $\su $ and $\sv $: 
\begin{align*}
 \su (t,x)&=t\left\{ e^{\kp /t}-e^{-(x-\kp )/t}\right\} , && 
 0<t\le 1,\,x>0, \\
 \sv (t,x)&=-t\log \left( 1-e^{-\kp /t}x/t\right) , && 
 0<t\le 1,\,0<x<te^{\kp /t}. 
\end{align*}

 Next let $f$ be in $\lp{1}{\gss{\D }}$ and satisfy 
 $e^{f}\in \lp{1}{\gss{\D }}$. We write $\{ f_{n}\} _{n=1}^{\infty }$ for 
 a sequence in $C_{b}^{1}(\R ^{\D })$ that approximates 
 $f$ in $\lp{1}{\gss{\D }}$. If we suppose that 
 $K:=\esssup \limits_{x\in \R ^{\D }}f(x)<\infty $, 
 then we may take the above sequence in such a way that 
 \begin{align*}
  \sup _{x\in \R ^{\D }}f_{n}(x)\le K 
 \end{align*}
 for all $n$ (cf.\ Proof of \pref{;punif}). 
 We have observed in the previous step that for each $f_{n}$, 
 \begin{align}\label{;prfexp1}
  \ex \left[ 
  \exp \left\{ 
  (1/t)\ex \left[ 
  f_{n}(\br _{1})|\F _{t}
  \right] 
  \right\} 
  \right] ^{t}
  \le \ex \left[ 
  e^{f_{n}(\br _{1})}
  \right] 
 \end{align}
 holds for all $0<t\le 1$. By noting that 
 $\left| e^{x_{1}}-e^{x_{2}}\right| \le e^{K}|x_{1}-x_{2}|$ 
 for $x_{1},x_{2}\le K$, 
 \begin{align*}
  \left| 
  \ex \left[ 
  e^{f_{n}(\br _{1})}
  \right] 
  -\ex \left[ 
  e^{f_{}(\br _{1})}
  \right] 
  \right| 
  \le 
  e^{K}\ex \left[ 
  \left| 
  f_{n}(\br _{1})-f(\br _{1})
  \right| 
  \right] \xrightarrow[n\to \infty ]{}0
 \end{align*}
 since $f_{n}$ approximates $f$ in $\lp{1}{\gss{\D }}$. The same 
 reasoning with $K/t$ replacing $K$, yields the convergence of 
 the expectation in the left-hand side of \eqref{;prfexp1}, 
 and hence we have 
 \begin{align*}
  \ex \left[ 
  \exp \left\{ 
  (1/t)\ex \left[ 
  f_{}(\br _{1})|\F _{t}
  \right] 
  \right\} 
  \right] ^{t}
  \le \ex \left[ 
  e^{f_{}(\br _{1})}
  \right] 
 \end{align*}
 when $f$ is bounded from above. For a general $f$ satisfying the 
 assumption, we truncate $f$ from above and use the monotone 
 convergence theorem and its conditional version (or the 
 conditional dominated convergence theorem) to reach 
 the conclusion. 
\end{proof}

\begin{rem}
 As noted in \cite[p.~274]{bgl}, the reverse hypercontractivity 
 \eqref{;rhc} was firstly observed by Borell and Janson 
 \cite{bj}${}^{\dagger}$\footnote{${}^{\dagger}$There seems to 
 be some confusion in the literature. 
 In \cite{bgl}, this paper is referred to as [88] in the bibliography, 
 which is found to be Borell's single-authored paper 
 entitled ``Positivity 
 improving operators and hypercontractivity'' (Math.\ Z.\ {\bf 180} (1982), 
 no.\ 2, 225--234); it is true that in this Borell's paper, the reverse 
 hypercontractivity is presented with its different proof than the original 
 one, however, the paper cited there for the original proof supposedly needs 
 to be corrected as the reference \cite{bj} in the present paper. } in the 
 name of ``converse hypercontractivity.'' It is also noted 
 in \cite[Remark~5.2.4]{bgl} that in the same way of 
 the hypercontractivity \eqref{;hc} yielding the logarithmic 
 Sobolev inequality \eqref{;lsi} and vice versa, 
 the reverse hypercontractivity is seen to be equivalent 
 to \eqref{;lsi} as well. 
\end{rem}

If we let $\al \downarrow 0$ in \eqref{;rhc}, we immediately obtain the 
following claim, which is of interest itself and which, to our knowledge, 
has not ever been stated in a clear manner. 

\begin{prop}\label{;ctmain}
 Suppose $f:\R ^{\D }\to \R $ is positive $\gss{\D }$-a.e.\ and 
 in $\lp{1}{\gss{\D }}$. 
 If $f$ satisfies 
 \begin{align*}
  \int _{\R ^{\D }}
  \log ^{+}( 1/f)\,d\gss{\D }
  <\infty ,
 \end{align*}
 then for every $t>0$, $1/\ou _{t}f$ is in $\lp{e^{2t}-1}{\gss{\D }}$; 
 in fact, it holds that 
 \begin{align*}
  \norm{1/\ou _{t}f}{e^{2t}-1}\le 
  \exp \left( -\int _{\R ^{\D }}\log f\,d\gss{\D }\right) 
 \end{align*}
 for all $t>0$. 
 Here $\log ^{+}x:=\max \left\{ \log x,0\right\} ,\,x>0$. 
\end{prop}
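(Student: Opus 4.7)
The plan is to pass to the limit $\al \downarrow 0$ in the reverse hypercontractivity \eqref{;rhc}. A direct application is not available, since the hypothesis $\int _{\R ^{\D }}\log ^{+}(1/f)\,d\gss{\D }<\infty $ does not by itself imply $1/f\in \lp{\al }{\gss{\D }}$ for any $\al >0$. I will therefore first truncate $f$ from below. For each positive integer $n$, set $f_{n}:=\max \{ f,1/n\} $, so that $1/f_{n}\le n$ (in particular $1/f_{n}\in \lp{\al }{\gss{\D }}$ for every $\al >0$) and $f_{n}\le f+1\in \lp{1}{\gss{\D }}$. Applying \eqref{;rhc} to $f_{n}$ yields
\begin{align*}
  \norm{1/\ou _{t}f_{n}}{e^{2t}(\al +1)-1}\le \norm{1/f_{n}}{\al }
  \quad \text{for all }\al >0 \text{ and }t\ge 0.
\end{align*}

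Next, I fix $t>0$ and send $\al \downarrow 0$. Since $1/f_{n}$ is bounded, the standard geometric-mean limit on a probability space yields $\norm{1/f_{n}}{\al }\to \exp \bigl( -\int _{\R ^{\D }}\log f_{n}\,d\gss{\D }\bigr) $. On the left, $\ou _{t}f_{n}\ge 1/n$ so $1/\ou _{t}f_{n}$ is bounded, and $r\mapsto \norm{g}{r}$ is continuous on $(0,\infty )$ for any bounded positive $g$; hence the left-hand side tends to $\norm{1/\ou _{t}f_{n}}{e^{2t}-1}$ as the exponent $e^{2t}(\al +1)-1\to e^{2t}-1>0$. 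This produces, for every $n$,
\begin{align*}
  \norm{1/\ou _{t}f_{n}}{e^{2t}-1}\le \exp \left( -\int _{\R ^{\D }}\log f_{n}\,d\gss{\D }\right) .
\end{align*}

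Finally, I let $n\to \infty $. As $f_{n}\downarrow f$ and $\ou _{t}$ is given by integration against a probability measure, $\ou _{t}f_{n}\downarrow \ou _{t}f$ pointwise, so $1/\ou _{t}f_{n}\uparrow 1/\ou _{t}f$; monotone convergence gives $\norm{1/\ou _{t}f_{n}}{e^{2t}-1}\uparrow \norm{1/\ou _{t}f}{e^{2t}-1}$. For the right-hand side, the hypothesis $(\log f)^{-}=\log ^{+}(1/f)\in \lp{1}{\gss{\D }}$ combined with the elementary bound $(\log f)^{+}\le f\in \lp{1}{\gss{\D }}$ gives $\log f\in \lp{1}{\gss{\D }}$; since $-\log f_{n}=\min (-\log f,\log n)\uparrow -\log f$ with integrable lower bound $\min (-\log f,0)=-\log ^{+}(1/f)$, monotone convergence (applied to $-\log f_{n}+\log ^{+}(1/f)\ge 0$) yields $\int _{\R ^{\D }}\log f_{n}\,d\gss{\D }\to \int _{\R ^{\D }}\log f\,d\gss{\D }$. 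Combining these limits delivers the claim.

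The main subtlety is the $\al \downarrow 0$ step: the geometric-mean limit $\lim _{\al \downarrow 0}\norm{g}{\al }=\exp \int \log g\,d\gss{\D }$ requires some integrability of $\log g$, which is exactly why the truncation to the bounded case is introduced before letting $\al $ tend to $0$. After that, the $n\to \infty $ passage is routine monotone convergence.
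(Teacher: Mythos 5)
Your proof is correct and follows essentially the same route as the paper's: truncate $f$ from below to make $1/f$ bounded, apply the reverse hypercontractivity, send $\al \downarrow 0$ using the geometric-mean limit (the paper invokes L'H\^opital's rule for the same computation), and then let the truncation level $n\to\infty$ via monotone convergence, checking integrability of $\log f$ from the two hypotheses $\log^{+}(1/f)\in \lp{1}{\gss{\D }}$ and $f\in \lp{1}{\gss{\D }}$. The only cosmetic difference is that the paper phrases the argument through the martingale representation $M_{t}(f)$ and the identity in law \eqref{;idenlaw}, whereas you work directly with $\ou _{t}$, but the content is identical.
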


\begin{proof}
 By \pref{;punif2} and by the identity \eqref{;idenlaw} 
 in law, we may start the proof from \eqref{;pstrong} when 
 $f\in \lp{1}{\gss{\D }}$ is bounded away from $0$: 
 $\essinf \limits_{x\in \R ^{\D }}f(x)>0$. Then 
 by the boundedness of 
 $1/f$, it is easily seen that as $\al \downarrow 0$, the left-hand side of 
 \eqref{;pstrong} converges to the expression 
 with $\rho _{\al }(t)$ replaced by 
 $\rho _{0}(t):=1/t-1$ for every $0<t<1$. 
(In fact, what we actually need for the proof is a simple 
fact that the above-mentioned expression does not exceed the 
left-hand side of \eqref{;pstrong}.) As for the right-hand side of 
\eqref{;pstrong}, we rewrite it into 
\begin{align}\label{;rhs}
 \exp \left\{ 
 \frac{1}{\al }\log \ex \left[ 
 \left( M_{1}(f)\right) ^{-\al }
 \right] 
 \right\} .
\end{align}
Recall $M_{1}(f)=f(\br _{1})$. Observe that for any open 
and bounded interval $I\subset (0,\infty )$, the random variable 
\begin{align*}
 \sup _{\al \in I}\left( M_{1}(f)\right) ^{-\al }
 \left| 
 \log M_{1}(f)
 \right| 
\end{align*}
is integrable thanks to the boundedness of $1/f$ and the 
condition $f\in \lp{1}{\gss{\D }}$. This observation entails that 
on $(0,\infty )$, there holds the equality 
\begin{align*}
 \frac{d}{d\al }\ex \left[ 
 \left( 
 M_{1}(f)
 \right) ^{-\al }
 \right] 
 =-\ex \left[ 
 \left( 
 M_{1}(f)
 \right) ^{-\al }\log M_{1}(f)
 \right] 
\end{align*}
whose right-hand side converges as $\al \downarrow 0$ to 
$-\ex \left[ \log M_{1}(f)\right] $ by the dominated convergence 
theorem. Therefore applying L'H\^opital's rule, we have 
\begin{align*}
 \lim _{\al \downarrow 0}\frac{1}{\al }
 \log \ex \left[ 
 \left( M_{1}(f)\right) ^{-\al }
 \right] 
 =-\ex \left[ \log M_{1}(f)\right] ,
\end{align*}
and hence \eqref{;rhs}, or the right-hand side of \eqref{;pstrong}, 
converges to $\exp \left\{ -\ex \left[ \log M_{1}(f)\right] \right\} $ 
as $\al \downarrow 0$. Consequently, we obtain 
\begin{align}\label{;pcstrong}
  \ex \left[ 
  \left( M_{t}(f)\right) ^{-\rho _{0}(t)}
  \right] ^{1/\rho _{0}(t)}
  \le \exp \left\{ 
  -\ex \left[ 
  \log M_{1}(f)
  \right] 
  \right\} 
 \end{align}
for any $f\in \lp{1}{\gss{\D }}$ which is bounded away from $0$. 

For a general $f$ satisfying the assumption, we set 
$f_{n}=\max \left\{ f,1/n\right\} $ for each positive integer $n$ 
as in the last step of the proof of \eqref{;pstrong}. 
By the same reasoning as used there, 
\begin{align*}
 \lim _{n\to \infty }\ex \left[ 
 \left( 
 M_{t}(f_{n})
 \right) ^{-\rho _{0}(t)}
 \right] 
 =\ex \left[ 
 \left( 
 M_{t}(f_{})
 \right) ^{-\rho _{0}(t)}
 \right] . 
\end{align*}
We also have the convergence 
\begin{align*}
 \lim _{n\to \infty }\ex \left[ 
 \log M_{1}(f_{n})
 \right] 
 =\ex \left[ 
 \log M_{1}(f)
 \right] 
\end{align*}
by the monotone convergence theorem because of 
the domination 
$\log f_{n}\le f_{n}-1\le f$ for all $n$. Since \eqref{;pcstrong} 
holds for any $f_{n}$ replacing $f$ as has already been observed, 
letting $n\to \infty $ on both sides leads to the desired conclusion 
thanks to \eqref{;idenlaw}. 
\end{proof}

We end this paper with a remark on the last proposition. 
\begin{rem} 
 \pref{;ctmain} may also be proven by taking 
 \begin{align*}
  \cf (x)=\frac{1}{x+\kp }, \quad x>0, 
 \end{align*}
 in \eqref{;egenrhc} with $\kp $ a positive constant, and by 
 repeating the same argument as in the proof of 
 \eqref{;pstrong}. Moreover, if we choose 
 \begin{align*}
  \cf (x)=\frac{1}{(x+\kp )^{e^{-2s}}}, \quad x>0, 
 \end{align*}
 for a given $s>0$, then we may deduce from \eqref{;egenrhc} 
 together with density arguments that 
 for every nonnegative $f\in \lp{1}{\gss{\D }}$, 
 \begin{align*}
  \norm{f}{1}\ge 
  &\exp \left\{ 
  \int _{\R ^{\D }}\log \left( \ou _{s}f\right) d\gss{\D }
  \right\} 
  \ge \norm{\ou _{t}f}{1-e^{-2(s-t)}} 
 %%\end{align*}
 \intertext{for all $0\le t<s$; in particular, taking $t=0$ leads to } 
 %%\begin{align*}
  \norm{f}{1}\ge 
  &\exp \left\{ 
  \int _{\R ^{\D }}\log \left( \ou _{s}f\right) d\gss{\D }
  \right\} 
  \ge \norm{f}{1-e^{-2s}} , 
 \end{align*}
 which is valid for every $s>0$. In the last two inequalities, 
 the upper bound $\norm{f}{1}$ is a consequence of Jensen's  
 inequality. 
\end{rem}

%%\bigskip 
%%
%%\noindent 
%%{\bf Acknowledgements.} 

%%%%%%%%% References %%%%%%%%%

\end{document}